\newcommand{\menge}[2]{\big\{{#1} \;|\; {#2}\big\}}
\newcommand{\emp}{\ensuremath{{\varnothing}}}
\newcommand{\scal}[2]{\left\langle{#1}\mid {#2} \right\rangle}
\newcommand{\vuo}{\ensuremath{\mbox{\footnotesize$\square$}}}
\newcommand{\HH}{\ensuremath{\mathcal H}}
\newcommand{\GG}{\ensuremath{\mathcal G}}
\newcommand{\RR}{\ensuremath{\mathbb R}}
\newcommand{\NN}{\ensuremath{\mathbb N}}
\newcommand{\dom}{\ensuremath{\operatorname{dom}}}
\newcommand{\prox}{\ensuremath{\operatorname{prox}}}
\newcommand{\TT}{\ensuremath{{\mathbf T}}}
\newcommand{\ran}{\ensuremath{\operatorname{ran}}}
\newcommand{\zer}{\ensuremath{\operatorname{zer}}}
\newcommand{\gra}{\ensuremath{\operatorname{gra}}}
\newcommand{\sss}{r}
\newcommand{\xx}{x}
\newcommand{\pp}{p}
\newcommand{\yy}{y}
\newcommand{\rr}{r}
\newcommand{\BB}{B}
\newcommand{\E}{\ensuremath{\mathsf{E}}}
\newcommand{\AAA}{A}
\newcommand{\QQ}{B}
\newcommand{\Id}{\ensuremath{\operatorname{Id}}}
\newcommand{\weakly}{\ensuremath{\rightharpoonup}}
\newcommand{\pinf}{\ensuremath{+\infty}}
\newtheorem{theorem}{Theorem}[section]
\newtheorem{lemma}[theorem]{Lemma}
\newtheorem{corollary}[theorem]{Corollary}
\newtheorem{definition}[theorem]{Definition}
\theoremstyle{plain}{\theorembodyfont{\rmfamily}
}
\theoremstyle{plain}{\theorembodyfont{\rmfamily}
}
\theoremstyle{plain}{\theorembodyfont{\rmfamily}
\newtheorem{algorithm}[theorem]{Algorithm}}
\theoremstyle{plain}{\theorembodyfont{\rmfamily}
}
\theoremstyle{plain}{\theorembodyfont{\rmfamily}
\newtheorem{problem}[theorem]{Problem}}
\theoremstyle{plain}{\theorembodyfont{\rmfamily}
\newtheorem{remark}[theorem]{Remark}}
\theoremstyle{plain}{\theorembodyfont{\rmfamily}
}
\definecolor{labelkey}{rgb}{0,0.08,0.45}
\definecolor{refkey}{rgb}{0,0.6,0.0}
\definecolor{Brown}{rgb}{0.45,0.0,0.05}
\definecolor{dgreen}{rgb}{0.00,0.49,0.00}
\definecolor{dblue}{rgb}{0,0.08,0.75}
\numberwithin{equation}{section}
\title{ Convergence analysis of the stochastic reflected forward-backward splitting algorithm}
\author{ Nguyen Van  Dung$^1$ and 
B$\grave{\text{\u{a}}}$ng C\^ong V\~u$$\\[5mm]
 \\
$^1$ Department of Mathematics, University of Transport and Communications,\\ 3 Cau Giay Street, Hanoi, Vietnam\\
 dungnv@utc.edu.vn;  bangcvvn@gmail.com
 } 
\begin{document}
\maketitle
\begin{abstract}
    We propose and analyze the convergence of a novel stochastic algorithm for solving monotone inclusions that are the sum of a maximal monotone operator and a monotone, Lipschitzian operator. The propose algorithm requires only unbiased estimations of the Lipschitzian operator.
     We obtain the rate $\mathcal{O}(log(n)/n)$ in expectation for the strongly monotone case, as well as almost sure convergence for the general case. 
     Furthermore, in the context of  application to convex-concave saddle point problems, we derive the rate of the primal-dual gap. In particular,
     we also obtain $\mathcal{O}(1/n)$ rate convergence of the primal-dual gap in the deterministic setting.
\end{abstract}

{\bf Keywords:} 
monotone inclusion, stochastic optimization, stochastic error,
monotone operator,
operator splitting,
 reflected method, Lipschitz,
composite operator,
duality,
primal-dual algorithm, ergodic convergence

{\bf Mathematics Subject Classifications (2010)}: 47H05, 49M29, 49M27, 90C25 
\section{Introduction}

A wide class of problems in monotone operator theory, variational inequalities, convex optimization, image processing, machine learning, reduces to the problem of solving monotone inclusions
involving Lipschitzian operators; see \cite{livre1,siop2,Buicomb,Luis18,plc6,sva2,Malitsky18b,Vu19,pesquet,
Tseng00,Bang16} and the references therein. In this paper, we revisit the generic monotone inclusions of 
finding a zero point of the sum of a maximally monotone operator $A$ and a monotone, $\mu$-Lipschitzian operator $B$, acting on a real separable Hilbert space $\HH$, i.e., 
\begin{equation}\label{p:primal-dual}
    \text{Find $\overline{x}\in\HH$ such that}\; 0 \in (A+B)\overline{x}.
\end{equation}
The first splitting method proposed for solving problem was in \cite{Tseng00} which is now known as the forward-backward-forward splitting method (FBFSM).  Further investigations of this method lead to a new primal-dual splitting method in \cite{siop2} where $B$ is a linear monotone skew operator in suitable product spaces.
A main limitation of the FBFSM is their two calls of $B$ per iteration. This issue was recently resolved in \cite{Malitsky18b} in which the forward reflected backward splitting method (FRBSM) was proposed, namely,
\begin{equation}\label{e:FRB}
\gamma \in \left]0,+\infty \right[, \quad  x_{n+1} = (\Id +\gamma A)^{-1}(x_n-2\gamma Bx_n+ \gamma Bx_{n-1}).
\end{equation}
An alternative approach to overcome this issue was in \cite{sva2} where the reflected forward backward splitting method (RFBSM) was proposed: 
\begin{equation}\label{e:RFB}
\gamma \in \left]0,+\infty \right[, \quad  x_{n+1} = (\Id +\gamma A)^{-1}(x_n-\gamma B(2x_n-x_{n-1})).
\end{equation}
It is important to stress that the methods FBFSM, RFBSM and RFBSM are limited to the deterministic setting. The stochastic version of FBFSM was investigated in \cite{Bang16}
and recently in \cite{Cui20}. Both works \cite{Bang16} and \cite{Cui20} requires two 
stochastic approximation of $B$. While, a stochastic version of FRBSM was also considered in \cite{Malitsky18b} for the case when $B$ is a finite sum. However, it remains require to evaluate the operator $B$.

\noindent The objective of this paper is to avoid these above limitations of \cite{Bang16,Cui20,Malitsky18b} by considering the stochastic counterpart of \eqref{e:RFB}.
At each iteration, we use only one unbiased estimation of $B(2x_{n}-x_{n-1})$ and hence the resulting algorithm shares the same structure as the standard stochastic forward-backward splitting \cite{Combettes2015,Combettes2016,Jota2}. However, it allows to solve a larger class of problems involving non-cocoercive operators.

In Section \ref{s:nota}, we recall  the basic notions  in convex analysis and monotone operator theory as well as the probability theory,
and establish the results which will be used in the proof of the convergence of the proposed method. We present the proposed method and derive the almost sure convergence, convergence in expectation in Section \ref{s:main}. In the last section, we will further apply the proposed algorithm to the convex-concave saddle problem involving the infimal convolutions, and  establish   the  rate of the ergodic convergence of the primal-dual gap.
\section{Notation and Background}
\label{s:nota}
 Let $\mathcal{H}$ be a separable real Hilbert space endowed with the inner product $\scal{.}{.}$ and the associated norm $\|.\|$. Let $(x_n)_{n\in\NN}$ be a sequence in $\mathcal{H}$, and $x\in\HH$.
We denote the strong convergence and the weak convergence of $(x_n)_{n\in\NN}$ to $x$ by $x_n\to x$ and  $x_n\weakly x$, respectively.
\begin{definition}
    Let $A\colon\HH\to 2^{\HH}$ be a set-valued operator.
    \begin{enumerate}
  \item  The domain of $A$ is denoted by $\dom(A)$ that 
is a set of all $x\in\HH$ such that $Ax\not= \emp$.
\item The range of $A$ is $\ran(A) = \menge{u\in\HH}{(\exists x\in\HH) u\in Ax }$. 
\item 
The graph of $A$ is 
$\gra(A) = \menge{(x,u)\in\HH\times\HH}{u\in Ax}$. 
\item 
The inverse of $A$ 
is $A^{-1}\colon u \mapsto \menge{x}{ u\in Ax}$. 
\item 
The zero set of $A$ is $\zer(A) = A^{-1}0$.
\end{enumerate}
\end{definition}
\begin{definition}  We have the following definitions:
\begin{enumerate}
\item We say that $A\colon\HH\to 2^\HH$ is  monotone if 
\begin{equation}\label{oioi}
\big(\forall (x,u)\in \gra A\big)\big(\forall (y,v)\in\gra A\big)
\quad\scal{x-y}{u-v}\geq 0.
\end{equation}
\item We say that $A\colon\HH\to 2^\HH$ is maximally monotone if it is monotone and  there exists no monotone operator $B$ such that $\gra(B)$ properly contains $\gra(A)$.
\item We say that  $A\colon\HH\to 2^\HH$  is $\phi_A$-uniformly monotone, at $x\in\dom (A)$, if there 
 exists an increasing function $\phi_A\colon\left[0,\infty\right[\to \left[0,\infty\right]$ that vanishes only at $0$ such that 
 \begin{equation}\label{unif}
\big(\forall u\in Ax\big)\big(\forall (y,v)\in\gra A\big)
\quad\scal{x-y}{u-v}\geq \phi_A(\|y-x\|).
\end{equation}
If $\phi_A = \nu_A|\cdot|^2$  for some $\nu_A\in \left]0,\infty\right[$, then we say that $A$ is $\nu_A$-strongly monotone.
\item The resolvent of $A$ is 
\begin{equation}
 J_A=(\Id + A)^{-1},\;  
\end{equation}
where $\Id$ denotes the identity operator on $\HH$.  
\item A single-valued operator $B\colon\HH\to\HH$ is $1$-cocoercive or firmly nonexpasive if
\begin{equation}
(\forall (x,y)\in\HH^2)\; \scal{x-y}{Bx-By} \geq \|Bx-By\|^2.
\end{equation}  
\end{enumerate}
\end{definition}
Let $\Gamma_0(\mathcal{H})$ be the class of proper lower semicontinuous convex function from $\mathcal{H}$ to $\left ]-\infty,+\infty\right]$. 
\begin{definition}
For $f \in \Gamma_0(\mathcal{H})$:
\begin{enumerate}
\item The proximity operator of $f$ is
\begin{align}
\text{prox}_f:\mathcal{H} \to \mathcal{H}: x \mapsto \underset{y \in \mathcal{H}}{\text{argmin}}\big(f(y)+\frac{1}{2}\|x-y\|^2\big).
\end{align}
\item The conjugate function of $f$ is
\begin{align}
f^*:a \mapsto \underset{x \in \mathcal{H}}{\text{sup}} \big( \scal{a}{x}-f(x) \big).
\end{align}
 \item The infimal convolution of the two functions $\ell$ and $g$ from $\HH$ to $\left]-\infty,+\infty\right]$ is 
 \begin{equation}
 \ell\;\vuo\; g\colon x \mapsto \inf_{y\in\HH}(\ell(y)+g(x-y)).
\end{equation}
 \end{enumerate}
\end{definition}
Note that  $\text{prox}_f=J_{\partial f}$, let $x \in \mathcal{H}$ and set $p=\text{prox}_f x$, we have
\begin{align}
(\forall y \in \mathcal{H})\ \ f(p)-f(y) \le \scal{ y-p}{p-x},
\end{align}
and that 
\begin{align}
(\forall f \in \Gamma_0(\mathcal{H})) \ \ (\partial f)^{-1}=\partial f^*.
\end{align}

\indent 
Following \cite{Led}, let $(\Omega, {\EuScript{F}},\mathsf{P})$ be a probability space. A $\mathcal H$-valued random variable is a measurable function $X:\Omega \to \mathcal H$, where $\mathcal H$ is endowed with the Borel $\sigma$-algebra. We denote by $\sigma(X)$ the $\sigma$-field generated by $X$. The expectation of a random variable $X$ is denoted by $\E[X]$. The conditional expectation of $X$ given a $\sigma$-field ${\EuScript{A}} \subset {\EuScript{F}}$ is denoted by $\mathsf{E}[X|{\EuScript{A}}]$. A $\mathcal H$-valued random process is a sequence $\{x_n\}$ of $\mathcal H$-valued random variables. The abbreviation a.s. stands for 'almost surely'.
\begin{lemma}{\upshape(\cite[Theorem 1]{robbins})} \label{lm1}
	Let $({\EuScript{F}}_n)_{n \in \mathbb N}$ be an increasing sequence of sub-$\sigma$-algebras of ${\EuScript{F}}$, let $(z_n)_{n\in\NN}, (\xi_n)_{n\in\NN}, (\zeta_n)_{n\in\NN}$ and $(t_n)_{n\in|NN}$ be $[0,+\infty[$-valued random sequences such that, for every $n \in \mathbb N$, $z_n,\xi_n,\zeta_n$ and $t_n$ are ${\EuScript{F}}_n$-measurable. Assume moreover that $\sum_{n \in \mathbb N}t_n<+\infty,\ \sum_{n \in \mathbb N}\zeta_n<+\infty \ a.s.$ and
	\begin{align*}
	(\forall n \in \mathbb N)\ \E[z_{n+1}|{\EuScript{F}}_n] \le (1+t_n)z_n+\zeta_n-\xi_n \ a.s.
	\end{align*}
	Then $z_n$ converges a.s. and $(\xi_n)$ is summable a.s.
\end{lemma}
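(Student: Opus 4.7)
The plan is to strip off the multiplicative factor $(1+t_n)$ via a deterministic rescaling, reduce the recursion to an $({\EuScript{F}}_n)$-adapted supermartingale that is bounded below, and then invoke Doob's supermartingale convergence theorem. No deep probabilistic machinery is required; the work is essentially in bookkeeping.

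First, I would set $a_n = \prod_{k=0}^{n-1}(1+t_k)^{-1}$ with $a_0 = 1$, which is ${\EuScript{F}}_n$-measurable. Since $\sum_n t_n < +\infty$ almost surely, $(a_n)$ decreases almost surely to a limit $a_\infty \in \,\left]0,1\right]$, so it is almost surely bounded above and bounded away from $0$. Multiplying the hypothesized inequality by $a_{n+1}$ and using the identity $a_{n+1}(1+t_n) = a_n$, the rescaled sequences $z'_n = a_n z_n$, $\xi'_n = a_{n+1}\xi_n$ and $\zeta'_n = a_{n+1}\zeta_n$ satisfy
\begin{equation*}
    \E[z'_{n+1}|{\EuScript{F}}_n] \le z'_n + \zeta'_n - \xi'_n \quad\text{a.s.},
\end{equation*}
and $\sum_n \zeta'_n < +\infty$ a.s.\ thanks to the almost-sure upper bound on $a_{n+1}$.

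Next, form the adapted process
\begin{equation*}
    U_n = z'_n + \sum_{k=0}^{n-1}\xi'_k - \sum_{k=0}^{n-1}\zeta'_k.
\end{equation*}
The rescaled recursion immediately yields $\E[U_{n+1}|{\EuScript{F}}_n] \le U_n$ almost surely, so $(U_n)$ is a supermartingale, and $U_n \ge -\sum_{k=0}^{+\infty}\zeta'_k$ is bounded below by an almost surely finite random variable. Localizing at the stopping times $\tau_M = \inf\{n : \sum_{k<n}\zeta'_k > M\}$ yields, for each $M \in \NN$, a stopped supermartingale $(U_{n\wedge\tau_M})$ that is bounded below by $-M$, to which Doob's theorem applies and gives almost sure convergence; letting $M\to+\infty$ and using $\tau_M\to+\infty$ a.s.\ transfers the convergence to $(U_n)$ itself. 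Since $(\sum_{k<n}\zeta'_k)$ converges a.s., this forces the quantity $z'_n + \sum_{k<n}\xi'_k$ to converge, and because $z'_n\ge 0$ while the partial sums of $\xi'_k$ are nondecreasing, each of the two summands converges separately: $\sum_k \xi'_k < +\infty$ a.s.\ and $(z'_n)$ converges almost surely. Undoing the rescaling with $a_n, a_{n+1}\to a_\infty \in \,\left]0,+\infty\right[$ a.s., we conclude that $(z_n)$ converges almost surely and that $(\xi_n)$ is summable almost surely.

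The main obstacle is keeping the supermartingale argument clean without imposing ad hoc integrability assumptions on the sequences, which are only assumed almost surely finite; the localization step above precisely sidesteps this, replacing $U_n$ by stopped processes bounded below by a deterministic constant so that Doob's theorem applies in its textbook form, after which the limit is transferred back via $\tau_M\uparrow +\infty$.
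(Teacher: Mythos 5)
The paper itself does not prove this lemma: it is quoted directly from Robbins and Siegmund (\cite[Theorem 1]{robbins}), so there is no internal proof to compare against. Your argument is the standard route to that theorem --- rescale by $a_n=\prod_{k=0}^{n-1}(1+t_k)^{-1}$ to absorb the factor $(1+t_n)$, assemble the compensated process $U_n$, and apply supermartingale convergence --- and most of the bookkeeping is right: $a_{n+1}$ is $\EuScript{F}_n$-measurable so it can be pulled out of the conditional expectation, $a_{n+1}(1+t_n)=a_n$ gives the clean recursion, $\sum_n\zeta_n'<+\infty$ a.s.\ since $a_{n+1}\le 1$, and the final step (a nondecreasing sequence $\sum_{k<n}\xi'_k$ dominated by a convergent sequence converges, hence so does $z'_n$, and $a_n\to a_\infty>0$ undoes the rescaling) is correct.

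The gap is in the invocation of Doob. Stopping at $\tau_M$ bounds $U_{n\wedge\tau_M}$ from below, but a supermartingale in the textbook sense must also be \emph{integrable}, and nothing in the hypotheses makes $z_0$ --- hence $U_0$, hence the stopped process --- integrable: the sequences are only assumed to be $\left[0,+\infty\right[$-valued and measurable. So the claim that the localization lets ``Doob's theorem apply in its textbook form'' is not justified; indeed, without integrability you cannot even assert that the stopped process is a supermartingale, only that it satisfies the conditional-expectation inequality in the generalized sense. The standard repair is one further truncation: on $A_K=\{z_0\le K\}\in\EuScript{F}_0$ the process $(U_{n\wedge\tau_M}+M)\mathbf{1}_{A_K}$ is nonnegative with expectation at most $K+M$ (by the tower property applied inductively), hence a genuine integrable supermartingale to which Doob applies; then let $K\to+\infty$ and $M\to+\infty$. (Equivalently, cite the convergence theorem for nonnegative generalized supermartingales, e.g.\ Neveu, which is how Robbins and Siegmund themselves proceed.) A second, cosmetic point: with $\tau_M=\inf\{n:\sum_{k<n}\zeta'_k>M\}$ the stopped value $U_{\tau_M}$ is only bounded below by $-M-\zeta'_{\tau_M-1}$, not by $-M$; defining $\tau_M$ with $\sum_{k\le n}\zeta'_k>M$ instead gives the deterministic bound you want.
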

\begin{corollary}\label{hq1}
	Let $({\EuScript{F}}_n)_{n \in \mathbb N}$ be an increasing sequence of sub-$\sigma$-algebras of ${\EuScript{F}}$, let $(x_n)_{n\in\NN}$ be $[0,+\infty[$-valued random sequences such that, for every $n \in \mathbb N$, $x_{n-1}$ is $\EuScript F_n$-measurable and \begin{align} \label{dks}
	\sum \limits_{n \in \NN} \E[x_n|\EuScript F_n]<+\infty \ \ a.s
	\end{align}
	Then $\sum \limits_{n \in \NN} x_n<+\infty$ a.s
\end{corollary}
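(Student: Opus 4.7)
The plan is to apply the Robbins--Siegmund-type inequality (Lemma \ref{lm1}) to the partial-sum process $z_n := \sum_{k=0}^{n-1} x_k$ (with the convention $z_0 = 0$), taking the auxiliary sequences $\xi_n \equiv 0$, $t_n \equiv 0$, and $\zeta_n := \E[x_n\mid \EuScript F_n]$. The appeal of this choice is that $z_{n+1}=z_n+x_n$ gives
\[
\E[z_{n+1}\mid \EuScript F_n] \;=\; z_n + \E[x_n\mid \EuScript F_n] \;=\; (1+t_n)\,z_n + \zeta_n - \xi_n,
\]
which lines up exactly with the inequality required by Lemma \ref{lm1}.

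The first thing I would check is measurability. The hypothesis says that for every $n$, $x_{n-1}$ is $\EuScript F_n$-measurable, equivalently $x_k$ is $\EuScript F_{k+1}$-measurable; since the $\EuScript F_n$'s are increasing, each $x_k$ with $k\le n-1$ is then $\EuScript F_n$-measurable, so $z_n$ is $\EuScript F_n$-measurable. The conditional expectation $\zeta_n$ is $\EuScript F_n$-measurable by construction, and the constant-zero sequences pose no issue. The summability $\sum_n t_n<\infty$ holds trivially, and $\sum_n \zeta_n<\infty$ almost surely is precisely the standing hypothesis \eqref{dks}.

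With those verifications in hand, Lemma \ref{lm1} yields that $(z_n)$ converges almost surely. Since $x_n\ge 0$, the sequence $(z_n)$ is non-decreasing, so its almost sure convergence forces its limit $\sum_{n\in\NN} x_n$ to be almost surely finite, which is the desired conclusion. I do not anticipate any real obstacle here: the whole argument is just the observation that Robbins--Siegmund specialises to a \emph{monotone convergence in conditional expectation} statement once $z_n$ is taken to be the partial sums, and the measurability assumption on $x_{n-1}$ has been stated precisely so that this partial-sum process is $\EuScript F_n$-adapted.
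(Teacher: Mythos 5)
Your proposal is correct and follows essentially the same route as the paper's own proof: both define the partial-sum process $z_n=\sum_{k\le n-1}x_k$, verify its $\EuScript F_n$-measurability from the stated hypothesis, and apply Lemma \ref{lm1} with $\zeta_n=\E[x_n|\EuScript F_n]$ (and $t_n=\xi_n=0$) to get almost sure convergence of $(z_n)_{n\in\NN}$, hence finiteness of $\sum_{n\in\NN}x_n$ by monotonicity. Your write-up is in fact slightly more complete, since it makes explicit the measurability verification and the final monotone-limit step that the paper leaves implicit.
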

\begin{proof}
Let us set $$(\forall n\in\NN)\; z_n=\sum \limits_{k=1}^{n-1} x_k.$$
Then, $z_n$ is $\EuScript F_n$ measurable. Moreover,
$$\E[z_{n+1}|\EuScript F_n]=z_n+\E[x_n|\EuScript F_n]$$
Hence, it follows from Lemma \ref{lm1} and \eqref{dks} that $(z_n)_{n \in \NN}$ converges a.s.
\end{proof}

\noindent The following lemma can be viewed as direct consequence of 
\cite[Proposition 2.3]{Combettes2015}.
\begin{lemma} \label{lm2}
	Let $C$ be a non-empty closed subset of $\mathcal{H}$ and let $(x_n)_{n\in\NN}$ be a $\mathcal{H}$-valued random process.
	Suppose that, for every $x\in C$, $(\|x_{n+1}-x\|)_{n\in\NN}$ converges a.s. 
Suppose that the set of weak sequentially cluster points of $(x_n)_{n\in\NN}$ is a subset of C a.s. Then $(x_n)_{n\in\NN}$ converges weakly a.s. to a $C$-valued random vector.
\end{lemma}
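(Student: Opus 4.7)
The plan is to adapt the classical Opial lemma to this stochastic setting, with the key idea being to use separability of $\HH$ to consolidate the countably many exceptional events into a single null event on which everything works deterministically.

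First, since $\HH$ is separable and $C$ is closed, I would fix a countable dense subset $D = \{d_k\}_{k\in\NN}$ of $C$. By assumption, for each $k \in \NN$ there is a null event $N_k$ off which $(\|x_n - d_k\|)_{n\in\NN}$ converges, and there is a null event $N_0$ off which every weak sequential cluster point of $(x_n)$ lies in $C$. Setting $N = N_0 \cup \bigcup_{k\in\NN} N_k$ gives a null event, and I would argue deterministically on $\Omega \setminus N$.

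Next, working off $N$, I would upgrade the convergence from $D$ to all of $C$. Boundedness of $(x_n)$ follows immediately from the convergence of $\|x_n - d_0\|$. For arbitrary $x \in C$, given $\varepsilon > 0$ I would pick $d_k \in D$ with $\|x - d_k\| < \varepsilon$ and use the triangle inequality
\begin{equation*}
\bigl|\|x_n - x\| - \|x_n - d_k\|\bigr| \le \|x - d_k\| < \varepsilon,
\end{equation*}
so that $\limsup_n \|x_n - x\| - \liminf_n \|x_n - x\| \le 2\varepsilon$; since $\varepsilon$ is arbitrary, $(\|x_n - x\|)_{n\in\NN}$ converges for every $x \in C$. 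Now I would run Opial's argument: since $(x_n)$ is bounded, it admits at least one weak sequential cluster point $\overline{x}$, which by the definition of $N$ lies in $C$. If $x^\star$ and $y^\star$ are two such cluster points, both lie in $C$, and from the identity
\begin{equation*}
\|x_n - x^\star\|^2 - \|x_n - y^\star\|^2 = 2\scal{x_n}{y^\star - x^\star} + \|x^\star\|^2 - \|y^\star\|^2,
\end{equation*}
the left-hand side converges, so $\scal{x_n}{y^\star - x^\star}$ converges. Passing along the two subsequences converging weakly to $x^\star$ and $y^\star$ respectively gives $\scal{x^\star}{y^\star - x^\star} = \scal{y^\star}{y^\star - x^\star}$, i.e.\ $\|x^\star - y^\star\|^2 = 0$. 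Thus $(x_n)$ has a unique weak cluster point $\overline{x}(\omega) \in C$ and converges weakly to it off $N$.

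Finally I would address measurability of the limit $\overline{x}$. Fix a countable dense subset $\{e_j\}_{j\in\NN}$ of $\HH$; for each $j$, the real-valued sequence $\scal{x_n}{e_j}$ converges a.s.\ to $\scal{\overline{x}}{e_j}$, which is therefore measurable as an a.s.\ limit of measurable functions. Since these inner products determine $\overline{x}$ uniquely and $\overline{x}$ lies in the closed set $C$ a.s., a standard argument shows that $\overline{x}$ is a $C$-valued random vector. The main obstacle in this proof is conceptual rather than technical: one must be careful to reduce the uncountably many exceptional sets (one for each $x \in C$) to a countable union via separability before attempting the Opial argument; once that reduction is in place, the remaining steps are deterministic and routine.
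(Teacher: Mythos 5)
Your proof is correct. The paper itself does not prove this lemma but simply cites it as a direct consequence of Proposition~2.3 of Combettes--Pesquet (2015), and your argument --- reducing the uncountably many null sets to a countable union via a countable dense subset of $C$, upgrading convergence of $(\|x_n-x\|)_{n\in\NN}$ from the dense subset to all of $C$ by the triangle inequality, running the deterministic Opial uniqueness argument pointwise off the consolidated null set, and establishing measurability of the weak limit through the inner products with a countable dense subset of $\HH$ --- is exactly the argument underlying that cited result.
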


\section{Algorithm and convergences} \label{s:main}
We propose the following algorithm, for solving  \eqref{p:primal-dual},
 that requires  only the unbiased estimations of the monotone, $\mu-$Lipschitzian operators $B$.
\begin{algorithm} \label{maina}
Let $(\gamma_n)_{n\in\NN}$ be a sequence in $\left]0,\pinf\right[$. Let
$ x_0,x_{-1}$ be $\HH$-valued, squared integrable random variables. Iterates
\begin{equation}  \label{algo:main}
 (\forall n\in\NN)\quad
\begin{array}{l}
\left\lfloor
\begin{array}{l}
y_n = 2x_n-x_{n-1}\\
\text{Finding $r_n$}: \E[r_n|\EuScript{F}_n] = By_n \\
x_{n+1} = J_{\gamma_n A}(x_n-\gamma_n r_n),
\end{array}
\right.\\[2mm]
\end{array}\\
\end{equation}
where $\EuScript{F}_n =\sigma(x_0,x_1,\ldots, x_n)$.
\end{algorithm}
\begin{remark} Here are some remarks.
\begin{enumerate}
    \item  Algorithm \ref{maina} is an extension of the reflected forward backward splitting in \cite{sva2} which itself recovers the projected reflected gradient methods for monotone variational inequalities in \cite{Malitsky15} as a special case. Further connections to existing works in the deterministic setting can be found in \cite{sva2} as well as \cite{Malitsky15}.
    \item In the special case when $A$ is a normal cone operator, $A= N_X$ for some non-empty closed convex set, the iteration \eqref{algo:main} reduces to the one in \cite{Cui16}. However, as we will see in Remark \ref{re:2}, our convergences results are completely different from that of \cite{Cui16}. 
    \item The proposed algorithm shares the same structure as the stochastic forward-backward splitting in \cite{Combettes2015,Combettes2016,Jota2}. The main advantage of \eqref{algo:main} is the monotonicity and Lipschitzianity of $B$ which is much weaker than cocoercivity assumption in  \cite{Combettes2015,Combettes2016,Jota2}.
    \item Under the current conditions on $A$ and $B$, an alternative method "Between forward-backward and forward-reflected-backward" for solving Problem \ref{p:primal-dual} is presented in \cite[Section 6]{Malitsky18b} which remains require to evaluate the operator $B$ as well as its unbiased estimations.
\end{enumerate}
\end{remark}
We first prove some lemmas which will be used in the proof of Theorem \ref{tr1} and Theorem 
\ref{t:2}.

\begin{lemma}  \label{l:main}  Let $(\xx_n)_{n\in\NN}$ 
and $(\yy_n)_{n\in\NN}$ be generated by \eqref{algo:main}.
Suppose that $\AAA$ is $\phi_A$-uniformly monotone and $B$ is 
$\phi_B$-uniformly monotone. Let $\xx\in\zer(\AAA+\QQ)$ and set
\begin{equation}
(\forall n\in\NN)\; \epsilon_{n} =2\gamma_n \big( \phi_A (\|x_{n+1}-x\|)+ \phi_A (\|x_{n+1}-x_n\|)+ \phi_B (\|y_n-x\|)\big).
\end{equation}
The following holds.
\begin{align}\label{e:mainesti}
 &\|x_{n+1}-x\|^2+\epsilon_n+(3-\frac{\gamma_n}{\gamma_{n-1}}) \|x_{n+1}-x_n\|^2+\frac{\gamma_n}{\gamma_{n-1}}\|x_{n+1}-y_n\|^2+2\gamma_n \scal{r_n-Bx}{x_{n+1}-x_n}\notag \\
     & \le \|x_n-x\|^2+2\gamma_n \scal{r_{n-1}-Bx}{x_n-x_{n-1}}+2\gamma_n \scal{r_{n-1}-r_n}{x_{n+1}-y_n}+\frac{\gamma_n}{\gamma_{n-1}}\|x_n-y_n\|^2\notag \\
     &\ +2 \gamma_n \scal{r_n-By_n}{x-y_n}.
\end{align}
\end{lemma}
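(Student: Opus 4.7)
The plan is to derive \eqref{e:mainesti} by combining three uniform monotonicity inequalities: two for $A$ and one for $B$. The starting point is the resolvent step. Reading $x_{n+1}=J_{\gamma_n A}(x_n-\gamma_n r_n)$ as an inclusion gives
\[
v_{n+1}:=\frac{x_n-x_{n+1}}{\gamma_n}-r_n\in Ax_{n+1},\qquad
v_{n}:=\frac{x_{n-1}-x_{n}}{\gamma_{n-1}}-r_{n-1}\in Ax_{n},
\]
while $x\in\zer(A+B)$ yields $-Bx\in Ax$. These are the three ``test points'' of $A$ I will use.

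First, I would apply $\phi_A$-uniform monotonicity between $(x_{n+1},v_{n+1})$ and $(x,-Bx)$, multiply by $2\gamma_n$, and use the three-point identity $2\langle x_{n+1}-x,\,x_n-x_{n+1}\rangle=\|x_n-x\|^2-\|x_{n+1}-x\|^2-\|x_{n+1}-x_n\|^2$. This produces the $\|x_{n+1}-x\|^2$ term, the $\phi_A(\|x_{n+1}-x\|)$ part of $\epsilon_n$, and the cross term $-2\gamma_n\langle r_n-Bx,x_{n+1}-x\rangle$. Second, I would apply $\phi_A$-uniform monotonicity between $(x_{n+1},v_{n+1})$ and $(x_n,v_n)$, multiply by $2\gamma_n$, and expand $\langle x_{n+1}-x_n,x_{n-1}-x_n\rangle$ using the reflection identity $x_{n+1}-y_n=(x_{n+1}-x_n)-(x_n-x_{n-1})$, which yields $\|x_{n+1}-y_n\|^2=\|x_{n+1}-x_n\|^2-2\langle x_{n+1}-x_n,x_n-x_{n-1}\rangle+\|x_n-y_n\|^2$. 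This produces the $(2-\gamma_n/\gamma_{n-1})\|x_{n+1}-x_n\|^2$ and $(\gamma_n/\gamma_{n-1})\|x_{n+1}-y_n\|^2$ terms on the left, the $(\gamma_n/\gamma_{n-1})\|x_n-y_n\|^2$ on the right, and the cross term $2\gamma_n\langle r_{n-1}-r_n,x_{n+1}-x_n\rangle$. Third, I would apply $\phi_B$-uniform monotonicity at $(y_n,By_n)$ and $(x,Bx)$, multiply by $2\gamma_n$, and split $By_n=r_n+(By_n-r_n)$ to get $2\gamma_n\phi_B(\|y_n-x\|)\le 2\gamma_n\langle r_n-Bx,y_n-x\rangle+2\gamma_n\langle r_n-By_n,x-y_n\rangle$.

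Summing the three inequalities accounts for every $\phi$ term in $\epsilon_n$ and every squared-norm term that appears in \eqref{e:mainesti}. What remains is purely a bookkeeping exercise on the cross terms. Grouping the contributions from steps one and three gives the pair $-2\gamma_n\langle r_n-Bx,x_{n+1}-x\rangle+2\gamma_n\langle r_n-Bx,y_n-x\rangle=-2\gamma_n\langle r_n-Bx,x_{n+1}-y_n\rangle$. Splitting this as $-2\gamma_n\langle r_n-Bx,x_{n+1}-x_n\rangle+2\gamma_n\langle r_n-Bx,x_n-x_{n-1}\rangle$ (using $x_{n+1}-y_n=(x_{n+1}-x_n)-(x_n-x_{n-1})$) moves the target left-hand term $2\gamma_n\langle r_n-Bx,x_{n+1}-x_n\rangle$ into place. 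Analogously, $2\gamma_n\langle r_{n-1}-r_n,x_{n+1}-x_n\rangle=2\gamma_n\langle r_{n-1}-r_n,x_{n+1}-y_n\rangle-2\gamma_n\langle r_{n-1}-r_n,x_n-x_{n-1}\rangle$. The two leftover pieces combine via $\langle r_n-Bx,x_n-x_{n-1}\rangle-\langle r_n-r_{n-1},x_n-x_{n-1}\rangle=\langle r_{n-1}-Bx,x_n-x_{n-1}\rangle$, exactly producing the remaining right-hand term of \eqref{e:mainesti}.

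The main obstacle will be precisely this last paragraph: the exact rearrangement of the four cross-inner-products $\langle r_n-Bx,\cdot\rangle$, $\langle r_{n-1}-r_n,\cdot\rangle$, $\langle r_n-By_n,\cdot\rangle$ and $\langle r_{n-1}-Bx,\cdot\rangle$. The key algebraic lever throughout is the reflection identity $y_n-x_n=x_n-x_{n-1}$, which is what lets the ``previous'' index $r_{n-1}-Bx$ and the ``gap'' $r_{n-1}-r_n$ on the right-hand side of \eqref{e:mainesti} arise naturally, and also what enables the three-point expansion of $\|x_{n+1}-y_n\|^2$ in the second monotonicity step. Once these identities are applied carefully, the three summed inequalities reduce to \eqref{e:mainesti} term-by-term.
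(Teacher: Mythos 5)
Your proposal is correct and follows essentially the same route as the paper's proof: two applications of the uniform monotonicity of $A$ (one against the solution pair $(x,-Bx)$, one between the consecutive resolvent outputs $(x_{n+1},v_{n+1})$ and $(x_n,v_n)$), one application of the uniform monotonicity of $B$ at the pair $(y_n,x)$, the two polarization identities, and the same cross-term bookkeeping driven by the reflection identity $y_n-x_n=x_n-x_{n-1}$. The only blemish is a sign slip in your penultimate display: since $x_{n+1}-x_n=(x_{n+1}-y_n)+(x_n-x_{n-1})$, the decomposition should read $\langle r_{n-1}-r_n,x_{n+1}-x_n\rangle=\langle r_{n-1}-r_n,x_{n+1}-y_n\rangle+\langle r_{n-1}-r_n,x_n-x_{n-1}\rangle$, which is in fact what your final combination identity $\langle r_n-Bx,x_n-x_{n-1}\rangle-\langle r_n-r_{n-1},x_n-x_{n-1}\rangle=\langle r_{n-1}-Bx,x_n-x_{n-1}\rangle$ already presupposes.
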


\begin{proof}
Let $n\in\NN$ and $\xx\in\zer(\AAA+\QQ)$.
Set  
\begin{equation}\label{e:pp}
    \pp_{n+1} = \frac{1}{\gamma_n}(\xx_n-\xx_{n+1})- \sss_n.
\end{equation}
Then, by the definition of the resolvent,
\begin{equation}
p_{n+1} \in A x_{n+1}.
\end{equation}
Since $\AAA$ is $\nu_A$-uniformly monotone and $- \QQ\xx\in \AAA\xx$, we obtain 
\begin{align}
 \scal{\frac{x_n-x_{n+1}}{\gamma_n}-r_n+Bx}{x_{n+1}-x}  \ge \nu_A \phi_A(\|x_{n+1}-x\|),
 \end{align}
 which is equivalent to 
 \begin{align}
 \scal{x_n-x_{n+1}}{x_{n+1}-x}-\gamma_n\nu_A \phi_A(\|x_{n+1}-x\|) \ge \gamma_n \scal{r_n-Bx}{x_{n+1}-x} \label{maine1}.
\end{align}
Let us estimate the right hand side of \eqref{maine1}. 
Using  $y_n=2x_n-x_{n-1}$, we have
\begin{align}
 \scal{r_n-Bx}{x_{n+1}-x}&=\scal{r_n-Bx}{x_{n+1}-y_n}+\scal{r_n-By_n}{y_n-x}+\scal{By_n-Bx}{y_n-x} \notag\\
 &=\scal{r_n-Bx}{x_{n+1}-x_n}-\scal{r_n-Bx}{x_n-x_{n-1}}+\scal{r_n-By_n}{y_n-x}\notag \\
 &\ \ +\scal{By_n-Bx}{y_n-x}\notag \\
 &=\scal{r_n-Bx}{x_{n+1}-x_n}-\scal{r_{n-1}-Bx}{x_n-x_{n-1}}+\scal{r_{n-1}-r_n}{x_n-x_{n-1}}\notag \\
 &\ \ +\scal{r_n-By_n}{y_n-x}+\scal{By_n-Bx}{y_n-x} \notag \\
 &=\scal{r_n-Bx}{x_{n+1}-x_n}-\scal{r_{n-1}-Bx}{x_n-x_{n-1}}+\scal{r_n-r_{n-1}}{x_{n+1}-y_n}\notag \\
 &\ \ +\scal{r_{n-1}-r_n}{x_{n+1}-x_n}+\scal{r_n-By_n}{y_n-x}+\scal{By_n-Bx}{y_n-x} \label{maine2}
 \end{align}
Using the  uniform monotonicity of $A$ again, it follows from \eqref{e:pp} that
 \begin{align}
   \scal{\frac{x_n-x_{n+1}}{\gamma_n}-r_n-\frac{x_{n-1}-x_n}{\gamma_{n-1}}+r_{n-1}}{x_{n+1}-x_n} \ge \nu_A \phi_A(\|x_{n+1}-x_n\|),
  \end{align}
which is equivalent to 
\begin{align}
 \scal{r_{n-1}-r_n}{x_{n+1}-x_n}& \ge \nu_A \phi_A(\|x_{n+1}-x_n\|)+\dfrac{\|x_{n+1}-x_n\|^2}{\gamma_n}\notag \\
   &\hspace{5cm}+\scal{\frac{x_n-y_n}{\gamma_{n-1}}}{x_{n+1}-x_n}. \label{maine3}
 \end{align}
 We have 
\begin{equation} \label{maine4}
\begin{cases}
2\scal{x_n-y_n}{\xx_{n+1}-\xx_n}  = \|\xx_{n+1}-\yy_{n}\|^2 -\|\xx_n-\yy_n\|^2 - \|\xx_{n+1}-\xx_n\|^2\\
2\scal{x_{n}-x_{n+1}}{x_{n+1}-x} = \|x_n-x\|^2 - \|x_n-x_{n+1}\|^2-\|x_{n+1}-x\|^2. 
\end{cases}
\end{equation}
Therefore, we derive
 from \eqref{maine1}, \eqref{maine2}, \eqref{maine3} and \eqref{maine4}, and the uniform monotonicity of $B$ that
 \begin{align}
  &\|x_n-x\|^2 - \|x_n-x_{n+1}\|^2-\|x_{n+1}-x\|^2-2\gamma_n\nu_A \phi_A(\|x_{n+1}-x\|) \notag \\
     &\ge 2\gamma_n \big(\scal{r_n-Bx}{x_{n+1}-x_n}-\scal{r_{n-1}-Bx}{x_n-x_{n-1}} \big)+2 \gamma_n \nu_B \phi_B(\|y_n-x\|)\notag \\
     &\ +2\gamma_n \scal{r_n-r_{n-1}}{x_{n+1}-y_n}+2\gamma_n \nu_A \phi_A(\|x_{n+1}-x_n\|)+2\|x_{n+1}-x_n\|^2\notag \\
     &\ +\frac{\gamma_n}{\gamma_{n-1}}\big(\|\xx_{n+1}-\yy_{n}\|^2 -\|\xx_n-\yy_n\|^2 - \|\xx_{n+1}-\xx_n\|^2\big)+2\gamma_n \scal{r_n-By_n}{y_n-x}.
 \end{align}
Hence, 
 \begin{align}
     &\|x_{n+1}-x\|^2+\epsilon_n+(3-\frac{\gamma_n}{\gamma_{n-1}}) \|x_{n+1}-x_n\|^2+\frac{\gamma_n}{\gamma_{n-1}}\|x_{n+1}-y_n\|^2+2\gamma_n \scal{r_n-Bx}{x_{n+1}-x_n}\notag \\
     & \le \|x_n-x\|^2+2\gamma_n \scal{r_{n-1}-Bx}{x_n-x_{n-1}}+2\gamma_n \scal{r_{n-1}-r_n}{x_{n+1}-y_n}+\frac{\gamma_n}{\gamma_{n-1}}\|x_n-y_n\|^2\notag \\
     &\ +2 \gamma_n \scal{r_n-By_n}{x-y_n},
 \end{align}
 which proves \eqref{e:mainesti}.
\end{proof}

\noindent
We also have the following lemma where \eqref{e:Qy} was used in \cite{Malitsky15} as well as 
in \cite{sva2}.
\begin{lemma}\label{l:Qes} For every $n\in\NN$, we have following estimations
\begin{align} 
2\scal{By_{n-1}-By_n}{x_{n+1}-y_{n}} \le  \mu(1+\sqrt{2}) \|\yy_n-\xx_{n}\|^2 + \mu \|\xx_n-\yy_{n-1}\|^2+ \mu\sqrt{2} \|\yy_n-\xx_{n+1}\|^2,\label{e:Qy}
\end{align}
and 
\begin{align} \label{Tn}
   \TT_n&=  \frac{1}{\gamma_n} \|\xx_{n}-\xx\|^2 +\mu \|\xx_n-\yy_{n-1}\|^2+(\frac{1}{\gamma_{n-1}}+ \mu(1+\sqrt{2}))\|\xx_{n}-\xx_{n-1}\|^2
+2 \alpha_{n-1} \notag\\
&\geq \frac{1}{2\gamma_n} \|\xx_{n}-\xx\|^2,
\end{align}
where $\alpha_n = \scal{By_n-Bx}{x_{n+1}-x_n}$.
\end{lemma}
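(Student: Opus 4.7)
The plan is to prove both estimates by the same short toolbox: (i) Cauchy--Schwarz, (ii) the $\mu$-Lipschitzianity of $B$, (iii) a triangle inequality that splits an ``awkward'' norm into precisely the quadratics appearing on the right-hand side, and (iv) weighted Young's inequalities with carefully matched weights. The whole subtlety lies in choosing those weights so that the aggregated coefficients reproduce exactly $\mu(1+\sqrt 2),\ \mu,\ \mu\sqrt 2$ in \eqref{e:Qy} and are absorbed into $\tfrac{1}{\gamma_{n-1}}+\mu(1+\sqrt 2)$ in \eqref{Tn}.

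For \eqref{e:Qy}, I would first write
\[
2\scal{By_{n-1}-By_n}{x_{n+1}-y_n}\leq 2\mu\|y_{n-1}-y_n\|\,\|x_{n+1}-y_n\|,
\]
split $\|y_{n-1}-y_n\|\leq\|y_{n-1}-x_n\|+\|x_n-y_n\|$, and apply $2ab\leq\lambda a^2+b^2/\lambda$ to each of the two resulting cross-products. Choosing $\lambda=1$ for the first piece and $\lambda=1+\sqrt 2$ for the second (so that $1/\lambda=\sqrt 2-1$), the coefficient of $\|y_n-x_{n+1}\|^2$ adds up to $\mu+\mu(\sqrt 2-1)=\mu\sqrt 2$, while the coefficients of $\|y_{n-1}-x_n\|^2$ and $\|y_n-x_n\|^2$ come out to $\mu$ and $\mu(1+\sqrt 2)$ respectively, giving exactly \eqref{e:Qy}.

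For \eqref{Tn}, the claim is equivalent to the nonnegativity of
\[
\tfrac{1}{2\gamma_n}\|x_n-x\|^2+\mu\|x_n-y_{n-1}\|^2+\bigl(\tfrac{1}{\gamma_{n-1}}+\mu(1+\sqrt 2)\bigr)\|x_n-x_{n-1}\|^2+2\alpha_{n-1},
\]
so only the potentially negative cross-term $2\alpha_{n-1}=2\scal{By_{n-1}-Bx}{x_n-x_{n-1}}$ must be dominated. Cauchy--Schwarz and the $\mu$-Lipschitz property give $|2\alpha_{n-1}|\leq 2\mu\|y_{n-1}-x\|\,\|x_n-x_{n-1}\|$, and I would use the triangle split $\|y_{n-1}-x\|\leq\|y_{n-1}-x_n\|+\|x_n-x\|$ precisely because the quadratics appearing in $T_n$ are $\|x_n-y_{n-1}\|^2$, $\|x_n-x\|^2$ and $\|x_n-x_{n-1}\|^2$. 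Two Young applications — the first with unit weight, the second with weight $\tfrac{1}{2\mu\gamma_n}$ so that the coefficient of $\|x_n-x\|^2$ comes out to $\tfrac{1}{2\gamma_n}$ — then yield
\[
|2\alpha_{n-1}|\leq \tfrac{1}{2\gamma_n}\|x_n-x\|^2+\mu\|x_n-y_{n-1}\|^2+(\mu+2\mu^2\gamma_n)\|x_n-x_{n-1}\|^2.
\]

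The main obstacle is this last piece of bookkeeping: to conclude, the coefficient $\mu+2\mu^2\gamma_n$ of $\|x_n-x_{n-1}\|^2$ must be dominated by $\tfrac{1}{\gamma_{n-1}}+\mu(1+\sqrt 2)$, i.e., one needs the compatibility bound $2\mu^2\gamma_n\leq\tfrac{1}{\gamma_{n-1}}+\mu\sqrt 2$. This is a mild step-size condition (automatic whenever $\mu\gamma_n\leq 1/\sqrt 2$, for instance), and I expect the paper to either treat it as a standing hypothesis or to enforce it in the convergence theorems that use this lemma. Beyond getting these constants to line up, no deeper ingredient — monotonicity of $B$, uniform monotonicity, resolvent arithmetic, or probability — is required; the lemma is purely analytic and packages the Young-type bounds in the precise form needed to telescope the estimate \eqref{e:mainesti} of Lemma~\ref{l:main} into a Lyapunov-style decrease.
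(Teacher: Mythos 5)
Your proof is correct and follows essentially the same route as the paper's: Cauchy--Schwarz plus $\mu$-Lipschitzianity, the triangle split $\|y_{n-1}-y_n\|\le\|y_{n-1}-x_n\|+\|x_n-y_n\|$ (resp.\ $\|y_{n-1}-x\|\le\|y_{n-1}-x_n\|+\|x_n-x\|$), and weighted Young inequalities tuned to produce the constants $\mu(1+\sqrt2)$, $\mu$, $\mu\sqrt2$. The only cosmetic difference is in the second estimate, where the paper uses unit Young weights and then absorbs the resulting $\mu\|x_n-x\|^2$ via $\tfrac{1}{2\gamma_n}\ge\mu$, whereas you use the weight $\tfrac{1}{2\mu\gamma_n}$ and need $2\mu^2\gamma_n\le\tfrac{1}{\gamma_{n-1}}+\mu\sqrt2$; both conditions are implied by the standing hypothesis $\gamma_n<(\sqrt2-1)/\mu$ of the theorems that invoke the lemma, and you were right to flag that the lemma tacitly relies on such a step-size restriction.
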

\begin{proof} Let $n\in\NN$.  We have
\begin{align*}
2 \scal{\QQ \yy_{n-1} -\QQ \yy_n}{\xx_{n+1}-\yy_n} &\le 
2\|x_{n+1}-y_n\|\|By_{n-1}-By_n\|\\
&\le 2\mu\|x_{n+1}-y_n\|\|y_{n-1}-y_n\|\\
&\le \dfrac{\mu}{\sqrt 2}\|y_n-y_{n-1}\|^2+\mu \sqrt 2 \|x_{n+1}-y_n\|^2\\
&=\dfrac{\mu}{\sqrt 2}  \|y_n-x_n+x_n-y_{n-1}\|^2 +\mu\sqrt 2 \|x_{n+1}-y_n\|^2\\
&\le \dfrac{\mu}{\sqrt 2} \big((1+\frac{1}{\sqrt 2-1})\|y_n-x_n\|^2+(1+\sqrt 2-1)\|x_n-y_{n-1}\|^2 \big )\\
&\ \ \ \ \ +\mu\sqrt 2 \|x_{n+1}-y_n\|^2\\
&= \mu (1+\sqrt 2)\|x_n-y_n\|^2+\mu\|x_n-y_{n-1}\|^2+\mu\sqrt 2 \|x_{n+1}-y_n\|^2.
\end{align*}
Since
$\alpha_{n-1} =\scal{\QQ\yy_{n-1}-\QQ\xx}{\xx_{n}-\xx_{n-1}}$, we obtain
\begin{align}
    2 |\alpha_{n-1}| &\le 2 \mu \|y_{n-1}-x\|\|x_n-x_{n-1}\| \notag \\
   &\le 2 \mu (\|x_n-y_{n-1}\|+\|x_n-x\|)\|x_n-x_{n-1}\|\notag \\
   &\le \mu (\|x_n-y_{n-1}\|^2+\|x_n-x\|^2+2 \|x_n-x_{n-1}\|^2). \label{alpha}
\end{align}
Therefore, we derive from \eqref{alpha} and the definition of $\TT_n$ that
\begin{align*}
     \TT_n & \ge \frac{1}{2\gamma_n} \|\xx_{n}-\xx\|^2+(\frac{1}{2\gamma_n}-\mu)\|\xx_{n}-\xx\|^2 +(\frac{1}{\gamma_{n-1}}+ \mu(-1+\sqrt{2}))\|\xx_{n}-\xx_{n-1}\|^2 \\
     &\ge \frac{1}{2\gamma_n} \|\xx_{n}-\xx\|^2,
\end{align*}
which proves \eqref{Tn}.
\end{proof}

\begin{theorem} \label{tr1} 
The following hold.
\begin{enumerate}
\item \label{tr1i}
Let $(\gamma_n)_{n\in\NN}$ be a nondecreasing sequence in $\left]0, \frac{\sqrt 2-1}{\mu}\right[$, satisfies $$\tau= \underset{n \in \NN}{\text{inf}}(\dfrac{2}{\gamma_n}-\dfrac{1}{\gamma_{n-1}}-\mu (1+\sqrt 2))>0 $$
In the setting of Algorithm \ref{algo:main}, assume that the following condition are satisfied for $\EuScript{F}_n=\sigma((x_k)_{0 \le k \le n})$
\begin{equation}\label{e:sum}
 \sum_{n\in\mathbb{N}}\mathsf{E}[\|r_n-By_n\|^2|\EuScript{F}_n]<
 +\infty\ \  a.s
 \end{equation}
 Then  $(x_n)$ converges weakly to a random varibale $\overline{x}\colon\Omega\to\zer(A+B)$ a.s.
 \item\label{tr1ii} Suppose that $\dom(A)$ is bounded, $A$ or $B$ is uniformly monotone. Let $(\gamma_n)_{n\in\NN}$ be a monotone decreasing 
 sequence in $\left]0, \frac{\sqrt 2-1}{\mu}\right[$ such that 
 \begin{equation}\label{e:Stand}
     (\gamma_n)_{n\in\NN}\in\ell_{2}(\NN)\backslash\ell_1(\NN) 
     \quad \text{and}\quad 
     \sum_{n\in\NN}\gamma^{2}_n \mathsf{E}[\|r_n-By_n\|^2|\EuScript{F}_n] < \infty\; \text{a.s}.
 \end{equation}
 Then $(x_n)_{n\in\NN}$ converges strongly a unique solution $\overline{x}$. 
 \end{enumerate}
\end{theorem}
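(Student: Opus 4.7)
The strategy is to combine the one-step inequality \eqref{e:mainesti} from Lemma \ref{l:main} with the Lipschitz-based estimate \eqref{e:Qy} to produce a quasi-Fej\'er recursion for the Lyapunov quantity $T_n$ of Lemma \ref{l:Qes}, then invoke the Robbins-Siegmund lemma (Lemma \ref{lm1}) after conditioning on $\EuScript{F}_n$. For part \ref{tr1i}, uniform monotonicity is absent so $\epsilon_n = 0$; for part \ref{tr1ii}, the $\epsilon_n$ term is retained and exploited.

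The key manipulation is to split the cross term so that the deterministic Lipschitz part is isolated from the stochastic noise:
\begin{equation*}
2\gamma_n\scal{r_{n-1}-r_n}{x_{n+1}-y_n} = 2\gamma_n\scal{By_{n-1}-By_n}{x_{n+1}-y_n} + 2\gamma_n\scal{r_{n-1}-By_{n-1}}{x_{n+1}-y_n} - 2\gamma_n\scal{r_n-By_n}{x_{n+1}-y_n},
\end{equation*}
and similarly rewrite $2\gamma_n\scal{r_n-Bx}{x_{n+1}-x_n} = 2\gamma_n\alpha_n + 2\gamma_n\scal{r_n-By_n}{x_{n+1}-x_n}$. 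Applying \eqref{e:Qy} to control the deterministic piece and Young's inequality to the remaining noise inner products (writing, for each inner product involving $r_n - By_n$, the second factor as an $\EuScript{F}_n$-measurable piece plus $x_{n+1} - x_n$, so that Young's inequality absorbs the latter into the positive $\|x_{n+1}-x_n\|^2$ margin on the left), and then taking $\E[\,\cdot\mid\EuScript{F}_n]$ using $\E[r_n\mid\EuScript{F}_n] = By_n$, one arrives for any $\overline{x}\in\zer(A+B)$ at an inequality of the form
\begin{equation*}
\E[T_{n+1}\mid \EuScript{F}_n] \le T_n - \tau\|x_{n+1}-x_n\|^2 - \epsilon_n + \zeta_n,
\end{equation*}
where $\tau > 0$ is the constant from the hypothesis and $\zeta_n$ is dominated by a constant multiple of $\gamma_n^2\E[\|r_n-By_n\|^2\mid\EuScript{F}_n]$, hence summable under \eqref{e:sum} (respectively under \eqref{e:Stand}).

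Lemma \ref{lm1} then yields almost sure convergence of $T_n$ (hence of $\|x_n-\overline{x}\|$ by the lower bound in \eqref{Tn}) and summability of $\|x_{n+1}-x_n\|^2$ together with $\sum_n\epsilon_n < +\infty$. For part \ref{tr1i}, the summability gives $x_{n+1}-x_n\to 0$ a.s., and combined with Corollary \ref{hq1} applied to $\E[\|r_n-By_n\|^2\mid\EuScript{F}_n]$, the relation $p_{n+1}\in Ax_{n+1}$ from \eqref{e:pp} passes to the limit along any weakly convergent subsequence by maximal monotonicity of $A$ and $\mu$-Lipschitzianity of $B$, showing every weak cluster point lies in $\zer(A+B)$ a.s.; Lemma \ref{lm2} concludes. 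For part \ref{tr1ii}, $\sum_n\gamma_n\phi(\|x_{n+1}-\overline{x}\|)<+\infty$ a.s. where $\phi$ is the uniform monotonicity modulus of whichever of $A,B$ is uniformly monotone; combined with $\sum_n\gamma_n = +\infty$, boundedness of $(x_n)$ (inherited from $\dom(A)$), and the fact that $\phi$ vanishes only at $0$, this forces $\liminf_n \|x_n-\overline{x}\|=0$ a.s., which together with the already-established a.s. convergence of $\|x_n-\overline{x}\|$ gives strong convergence to the unique zero.

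The main obstacle is the careful bookkeeping in the second step: because $r_n$ is not $\EuScript{F}_n$-measurable while $x_{n+1}$ depends on $r_n$, several noise inner products do not vanish under conditional expectation and must be distributed between $\EuScript{F}_n$-measurable quantities and the increment $x_{n+1}-x_n$, with Young's coefficients tuned precisely so that the residual $\|x_{n+1}-x_n\|^2$ is absorbed by the $\tau$-margin provided by the step-size condition $\inf_n(2/\gamma_n - 1/\gamma_{n-1} - \mu(1+\sqrt{2}))>0$.
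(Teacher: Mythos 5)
Your strategy is the paper's: the same decomposition of $2\gamma_n\scal{r_{n-1}-r_n}{x_{n+1}-y_n}$ into the Lipschitz part (controlled by \eqref{e:Qy}) plus two noise terms handled by Young's inequality with coefficients tuned against the $\tau$-margin, the same Robbins--Siegmund conclusion via Lemma \ref{lm1} and Corollary \ref{hq1}, the same passage to the limit through the closedness of $\gra(A+B)$ and Lemma \ref{lm2} for part \ref{tr1i}, and the same ``$\sum_n\gamma_n\phi(\cdot)<\infty$ versus $\sum_n\gamma_n=\infty$'' argument for part \ref{tr1ii}. Part \ref{tr1i} of your sketch is sound; the paper's actual Lyapunov function $\theta_n$ in \eqref{e:ed0} carries the lagged terms $2\delta_{n-1}$ and $\chi^{-1}\|r_{n-1}-By_{n-1}\|^2$ rather than the noiseless $\alpha_{n-1}$ appearing in $\TT_n$, but that is bookkeeping.

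There is, however, a concrete gap in part \ref{tr1ii}. There the step sizes are \emph{decreasing}, so one must keep the weights $\gamma_n$ on the cross terms; the telescoping of $2\gamma_n\alpha_n$ against $2\gamma_{n-1}\alpha_{n-1}$ then leaves the residual $2(\gamma_{n-1}-\gamma_n)\alpha_{n-1}$, which is neither a martingale-difference term nor dominated by $\gamma_n^2\E[\|r_n-By_n\|^2\,|\,\EuScript{F}_n]$, so the recursion you claim, with $\zeta_n$ of that form only, is not obtainable as stated. This is precisely where the hypothesis that $\dom(A)$ is bounded must enter: it yields $|\alpha_n|\le M$ uniformly, whence $\sum_{n}2(\gamma_{n-1}-\gamma_n)|\alpha_{n-1}|\le 2\gamma_0 M$ and the residual is absorbed into a summable perturbation (this is the term $2(\gamma_{n-1}-\gamma_n)M$ in \eqref{e:ww3}). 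Your proposal invokes boundedness of $\dom(A)$ only to get boundedness of $(x_n)_{n\in\NN}$, which is not where it is needed --- that already follows from the convergence of the Lyapunov sequence. A second, smaller omission: when $B$ is the uniformly monotone operator, $\epsilon_n$ controls $\phi_B(\|y_n-x\|)$, so you must additionally pass from $\liminf_n\|y_n-x\|=0$ to $\liminf_n\|x_n-x\|=0$ via $\|y_n-x_n\|=\|x_n-x_{n-1}\|\to 0$ before concluding.
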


\begin{proof}
\ref{tr1i}:
 Let $n\in\NN$.
 Applying Lemma \ref{l:main} with $\nu_A=\nu_B =0$, we have,
\begin{align} \label{e:Stas1}
&\frac{1}{\gamma_{n}}\|\xx_{n+1}-\xx\|^2 
 +\frac{1}{\gamma_{n-1}}\|\xx_{n+1}-\yy_n\|^2
+\big( \frac{3}{\gamma_n}-\frac{1}{\gamma_{n-1}}\big) \|\xx_n-\xx_{n+1}\|^2
+2 \delta_n
\notag\\
&\leq \frac{1}{\gamma_n} \|\xx_{n}-\xx\|^2 +\frac{1}{\gamma_{n-1}}\|\xx_{n}-\yy_n\|^2
+ 2\delta_{n-1}
+2 \scal{r_{n-1}-r_n}{x_{n+1}-y_n}+2\beta_n
\end{align}
where $$\begin{cases}
    \delta_n=\scal{r_n-Bx}{x_{n+1}-x_n}\\
    \beta_n=\scal{r_n-By_n}{x-y_n}
\end{cases}$$

Let $\chi$ be in $\left]0,\frac{\tau}{2}\right[$, it follows from 
the Cauchy Schwarz's inequality and \eqref{e:Qy} that
\begin{align}\label{e:Sta1}
2 \scal{\rr_{n-1}- r_n}{\xx_{n+1}-\yy_n}&=2\scal{r_{n-1}-By_{n-1}+By_{n-1}-By_n+By_n-r_n}{x_{n+1}-y_n}\notag \\ 
&\le \frac{\|r_{n-1}-By_{n-1}\|^2}{\chi}+\chi \|x_{n+1}-y_n\|^2+ \frac{\|r_{n}-By_{n}\|^2}{\chi}+\chi \|x_{n+1}-y_n\|^2\notag \\
&+\mu(1+\sqrt{2}) \|\yy_n-\xx_{n}\|^2 + \mu \|\xx_n-\yy_{n-1}\|^2+ \mu\sqrt{2} \|\yy_n-\xx_{n+1}\|^2
\end{align}
Hence, we derive from \eqref{e:Stas1} and \eqref{e:Sta1} that 
\begin{align}
    &\frac{1}{\gamma_{n}}\|\xx_{n+1}-\xx\|^2 
 +(\frac{1}{\gamma_{n-1}}-\mu \sqrt 2-2\chi)\|\xx_{n+1}-\yy_n\|^2
+\big( \frac{3}{\gamma_n}-\frac{1}{\gamma_{n-1}} \big) \|\xx_n-\xx_{n+1}\|^2+2\delta_n
\notag\\
&\leq \frac{1}{\gamma_n} \|\xx_{n}-\xx\|^2 +(\frac{1}{\gamma_{n-1}}+\mu (1+\sqrt 2))\|\xx_{n}-\yy_n\|^2+\mu \|x_n-y_{n-1}\|^2
+2\delta_{n-1}+2\beta_n\notag \\
&\ \ +\frac{\|r_{n-1}-By_{n-1}\|^2}{\chi}+\frac{\|r_{n}-By_{n}\|^2}{\chi}.
\end{align}
In turn, using $\gamma_n \le \gamma_{n+1}$ and $x_n-y_n=x_{n-1}-x_n$
\begin{align}
  &  \frac{1}{\gamma_{n+1}}\|\xx_{n+1}-\xx\|^2 
 +\mu \|\xx_{n+1}-\yy_n\|^2
+\big( \frac{3}{\gamma_n}-\frac{1}{\gamma_{n-1}}\big) \|\xx_n-\xx_{n+1}\|^2
+2 \delta_n \notag \\
& \le \frac{1}{\gamma_{n}} \|\xx_{n}-\xx\|^2+\mu \|\xx_{n}-\yy_{n-1}\|^2+\big( \frac{3}{\gamma_{n-1}}-\frac{1}{\gamma_{n-2}}\big) \|\xx_n-\xx_{n-1}\|^2+2 \delta_{n-1} +2\beta_n \notag \\ &-(\frac{1}{\gamma_{n-1}}-\mu (1+\sqrt 2)-2\chi)\|\xx_{n+1}-\yy_{n}\|^2-\big( \frac{2}{\gamma_{n-1}}-\frac{1}{\gamma_{n-2}}-\mu (1+\sqrt 2) \big) \|x_n-x_{n-1}\|^2\notag\\
&\hspace{9.9cm}
+\frac{\|r_{n-1}-By_{n-1}\|^2+\|r_n-By_n\|^2}{\chi}. \label{e:unn}
  \end{align}
 Let us set
  \begin{equation}\label{e:ed0}
      \theta_n=\frac{1}{\gamma_{n}} \|\xx_{n}-\xx\|^2+\mu \|\xx_{n}-\yy_{n-1}\|^2+\big( \frac{3}{\gamma_{n-1}}-\frac{1}{\gamma_{n-2}} \big) \|\xx_n-\xx_{n-1}\|^2+2 \delta_{n-1}+\frac{\|r_{n-1}-By_{n-1}\|^2}{\chi}.
  \end{equation}
  We have 
  \begin{align}
      2|\delta_{n-1}|&=2|\scal{r_{n-1}-By_{n-1}}{x_n-x_{n-1}}+2\scal{By_{n-1}-Bx}{x_n-x_{n-1}}|\notag \\
      &\le \frac{\|r_{n-1}-By_{n-1}\|^2}{\chi}+\chi \|x_n-x_{n-1}\|^2+2\mu \|y_{n-1}-x\|\|x_n-x_{n-1}\|\label{ealpha} \\
      &\le \frac{\|r_{n-1}-By_{n-1}\|^2}{\chi}+\chi \|x_n-x_{n-1}\|^2+2 \mu \big( \|x_n-y_{n-1}\|+\|x_n-x\| \big)\|x_n-x_{n-1}\|\notag \\
      &\le \frac{\|r_{n-1}-By_{n-1}\|^2}{\chi}+\chi \|x_n-x_{n-1}\|^2+\mu \big( \|x_n-y_{n-1}\|^2+\|x_n-x\|^2+2\|x_n-x_{n-1}\|^2 \big ) \notag \\
      &\hspace{-1cm}\Rightarrow 
      \theta_n \ge (\frac{1}{\gamma_n}-\mu)\|x_n-x\|^2+\big( \frac{3}{\gamma_{n-1}}-\frac{1}{\gamma_{n-2}}-\chi -2\mu \big) \|x_n-x_{n-1}\|^2 \ge \mu \|x_n-x\|^2 \ge 0 \label{etheta}
      \end{align}
   Moreover, it follows from 
  \eqref{algo:main} that
    \begin{equation}
      \E[\beta_n|\EuScript{F}_n] = 0.
  \end{equation} 
  Therefore, by taking the conditional expectation both sides of \eqref{e:unn} 
  with respect to $\EuScript{F}_n$, we obtain
  \begin{align} 
    \E[\theta_{n+1}|\EuScript{F}_n] &\le \theta_n -(\frac{1}{\gamma_{n-1}}- \mu  (\sqrt 2+1)-2\chi)\E [\|\xx_{n+1}-\yy_{n}\|^2|\EuScript{F}_n]\notag\\
    &-\big( \frac{2}{\gamma_{n-1}}-\frac{1}{\gamma_{n-2}}-\mu (1+\sqrt 2)\big) \|\xx_n-x_{n-1}\|^2
    + 2\frac{\E[\|r_{n}-By_{n}\|^2|\EuScript F_n]}{\chi}. \label{esu}
\end{align}
 It follows from our conditions on step sizes $(\gamma_{n})_{n\in\NN}$ that
 \begin{equation}
     \dfrac{1}{\gamma_{n-1}}- \mu  (\sqrt 2+1)-2\chi>0 \; \text{and}\;
      \dfrac{2}{\gamma_{n-1}}-\dfrac{1}{\gamma_{n-2}}-\mu (1+\sqrt 2)-\chi>0,
 \end{equation}

Now, in view of Lemma \ref{lm1}, we get 
\begin{equation}\label{e:d1}
    \theta_n \to \bar \theta \ \ \text{and}\ \ \|x_n-x_{n-1}\| \to 0 \ \ a.s.
\end{equation}
 From \eqref{e:sum} and Corollary \ref{hq1}, we have 
 \begin{equation}
     \sum \limits_{n \in \NN} \|r_{n-1}-By_{n-1}\|^2<+\infty \ \Rightarrow \|r_{n-1}-By_{n-1}\| \to 0 \ \  a.s \label{er}
 \end{equation}
Since $(\theta_n)_{n\in\NN}$ converges, it is bounded and therefore, using \eqref{etheta}, it follows that $(\|x_n-x\|)_{n\in\NN}$ and $(x_n)_{n\in\NN}$ are bounded. Hence $(y_n)_{n\in\NN}$ is also bounded. In turn, from \eqref{ealpha}, we derive 
\begin{align}\delta_{n-1} \to 0 \ \ a.s  \label{e:d2}
\end{align}
Moreover,
\begin{equation}\label{e:d3}
    \|x_n-y_n\|=\|x_n-x_{n-1}\| \to 0,\; \text{and}\; 
    \|x_n-y_{n-1}\| \le \|x_n-x_{n-1}\|+\|x_{n-1}-y_{n-1}\| \to 0.
\end{equation}
Therefore, we derive from \eqref{e:ed0}, \eqref{e:d1}, \eqref{er}, \eqref{e:d2}, \eqref{e:d3} and Lemma \ref{lm1}
that 
\begin{equation}
\text{ $(\|x_n-x\|)_{n \in \NN}$ converges a.s.}
\end{equation}
Let $x^*$ be a weak cluster point of $(x_n)_{n\in\NN}$. 
Then, there exists a subsequence $(x_{n_k})_{k \in \mathbb N} $ which converges weakly to $x^*$ a.s. By \eqref{e:d3}, $ y_{n_k} \rightharpoonup x^* $ a.s.  Let us next set 
\begin{equation}
z_n=(I+\gamma_n A)^{-1}(x_n-\gamma_n By_n).
\end{equation}
Then, since $J_{\gamma_n A}$ is nonexpansive, we have 
\begin{equation}\label{e:d5}
\|x_{n+1}-z_n\| \le \gamma_n \|By_n-r_n\| \to 0\; \text{a.s}.
\end{equation}
It follows from
$x_{n_k} \rightharpoonup x^*$ that $x_{n_k+1} \rightharpoonup  x^*$ and hence from \eqref{e:d5} that $ z_{n_k}(\omega) \rightharpoonup x^*(\omega)$.
Since 	$z_{n_k}=(I+\gamma_{n_k}A)^{-1}(x_{n_k}-\gamma_{n_k}B y_{n_k})$, we have
	\begin{equation}
	\frac{ x_{n_k}-z_{n_k}}{\gamma_{n_k}}- B y_{n_k} + B z_{n_k}\in (A+B)z_{n_k},
	\end{equation}
From \eqref{e:d3} and \eqref{e:d5}, we have
\begin{align}
    \lim_{k \to \infty} \|x_{n_k}-z_{n_k}\|=\lim_{k \to \infty} \|y_{n_k}-z_{n_k}\|=0
\end{align}
Since $B$ is $\mu$-Lipschitz and $(\gamma_n)_{n\in\NN}$ is bounded away from $0$, it follows that 
\begin{equation}
\frac{ x_{n_k}-z_{n_k}}{\gamma_{n_k}}- B y_{n_k} + B z_{n_k} \to 0 \quad{a.s.}
\end{equation}
Using  \cite[Corollary 25.5]{livre1}, the sum $A+B$ is  maximally monotone and hence, its graph is closed in $\HH^{weak}\times\HH^{strong}$ \cite[Proposition 20.38]{livre1}. 
Therefore, $0\in (A+B) x^*$ a.s., that is $x^*\in \text{zer}(A+B)$ a.s. By Lemma \ref{lm2}, the sequence $(x_n)_{n\in\NN}$ converges weakly to $\bar x \in \text{zer}(A+B)$ and the proof is complete

\noindent\ref{tr1ii}
It follows from Lemma \ref{l:main} and  \eqref{e:Qy} that
\begin{align} \label{e:tt1}
&\|\xx_{n+1}-\xx\|^2 
 +\frac{\gamma_n}{\gamma_{n-1}}\|\xx_{n+1}-\yy_n\|^2
+\big( 3-\frac{\gamma_n}{\gamma_{n-1}}\big) \|\xx_n-\xx_{n+1}\|^2
+2 \gamma_n\alpha_n \notag\\
&\ \ +2\gamma_n \scal{r_n-By_n}{x_{n+1}-x_n}+\epsilon_n
\notag\\
&\le \|x_n-x\|^2+2\gamma_n  \scal{r_{n-1}-By_{n-1}+By_{n-1}-By_n+By_n-r_n}{x_{n+1}-y_n}+\frac{\gamma_n}{\gamma_{n-1}}\|x_n-y_n\|^2\notag \\
&\ \ +2\gamma_n \alpha_{n-1}+2\gamma_n \scal{r_{n-1}-By_{n-1}}{x_n-x_{n-1}}+2\gamma_n \beta_n\notag \\
&\leq  \|\xx_{n}-\xx\|^2 
+\gamma_n \big( \mu(1+\sqrt{2}) \|\yy_n-\xx_{n}\|^2 + \mu \|\xx_n-\yy_{n-1}\|^2+ \mu\sqrt{2} \|\yy_n-\xx_{n+1}\|^2 \big)+\frac{\gamma_n}{\gamma_{n-1}}\|\xx_{n}-\yy_n\|^2 \notag \\
&\ +2 \gamma_n \alpha_{n-1}+2\gamma_n \scal{r_{n-1}-By_{n-1}}{x_n-x_{n-1}}+2\gamma_n \beta_n+2\gamma_n \scal{\sss_{n-1}- \BB\yy_{n-1}+By_n-r_n}{\xx_{n+1}-\yy_n},
\end{align}
For any $\eta\in\left]0,\frac{1-\gamma_0 \mu (1+\sqrt 2)}{10}\right[$, using 
the Cauchy Schwarz's inequality, we have 
\begin{equation} \label{bdt}
    \begin{cases} 2\gamma_n \scal{\sss_{n-1}- \BB\yy_{n-1}}{\xx_{n+1}-\yy_n} &\leq 
    \frac{\gamma_n^2}{\eta} \|r_{n-1}-By_{n-1}\|^2+\eta \|x_{n+1}-y_n\|^2\\
    2\gamma_n \scal{\sss_{n}- \BB\yy_{n}}{\xx_{n+1}-\yy_n} &\leq 
    \frac{\gamma_n^2}{\eta} \|r_{n}-By_{n}\|^2+\eta \|x_{n+1}-y_n\|^2\\
    2\gamma_n \scal{r_{n-1}-By_{n-1}}{x_n-x_{n-1}}& \le \frac{\gamma_n^2}{\eta} \|r_{n-1}-By_{n-1}\|^2+\eta \|x_{n}-x_{n-1}\|^2\\
    2\gamma_n\scal{r_{n}-By_{n}}{x_{n+1}-x_{n}}& \le \frac{\gamma_n^2}{\eta} \|r_{n}-By_{n}\|^2+\eta \|x_{n+1}-x_n\|^2
    \end{cases}
\end{equation}
We have \begin{equation} \label{itri}\|x_{n+1}-y_n\|^2 \le 2(\|x_{n+1}-x_n\|^2+\|x_n-y_n\|^2)
\end{equation}
Therefore, we derive from \eqref{e:tt1}, \eqref{bdt}, \eqref{itri}, the monotonic decreasing of
$(\gamma_n)_{n\in\NN}$ and $y_n-x_n=x_n-x_{n-1}$ that 
\begin{align}\label{e:tt2}
        &\|x_{n+1}-x\|^2+(\frac{\gamma_n}{\gamma_{n-1}}-\gamma_n \mu \sqrt 2)\|x_{n+1}-y_n\|^2+2 \|x_{n+1}-x_n\|^2+2\gamma_n \alpha_n+\epsilon_n \notag\\
    &\le \|x_n-x\|^2+\gamma_n \mu \|x_n-y_{n-1}\|^2+(1+\gamma_0 \mu (1+\sqrt 2)) \|x_n-x_{n-1}\|^2+2\gamma_{n-1}\alpha_{n-1}\notag \\
    &-2(\gamma_{n-1}-\gamma_n)\alpha_{n-1}+2\gamma_n \beta_n +2\frac{\gamma_{n-1}^2}{\eta} \|r_{n-1}-By_{n-1}\|^2+2\frac{\gamma_n^2}{\eta} \|r_{n}-By_{n}\|^2\notag \\
    &\ +5\eta \big( \|x_{n+1}-x_n\|^2+\|x_n-y_n\|^2 \big)
\end{align}
Since
$\dom (A)$ is bounded, 
there exists $M>0$ such that $(\forall n\in\NN)\;|\alpha_n| \le M$, and hence \eqref{e:tt2} 
implies that
\begin{align}
    &\|x_{n+1}-x\|^2+\gamma_n \mu \|x_{n+1}-y_n\|^2+(2-5\eta) \|x_{n+1}-x_n\|^2+2\gamma_n \alpha_n \notag \\
    & \le \|x_n-x\|^2+\gamma_{n-1} \mu \|x_n-y_{n-1}\|^2+(2-5\eta)\|x_n-x_{n-1}\|^2+2 \gamma_{n-1}\alpha_{n-1}\notag \\
    &-(\frac{\gamma_n}{\gamma_{n-1}}-\gamma_n \mu (\sqrt 2+1))\|x_{n+1}-y_n\|^2-(1-\gamma_0 \mu (1+\sqrt 2)-10\eta)\|x_n-x_{n-1}\|^2\notag \\
    &+2(\gamma_{n-1}-\gamma_n)M+2\frac{\gamma_{n-1}^2}{\eta} \|r_{n-1}-By_{n-1}\|^2+2\frac{\gamma_{n}^2}{\eta} \|r_{n}-By_{n}\|^2-\epsilon_n+2\gamma_n \beta_n.\label{e:ww3}
\end{align}
Let us set 
\begin{equation}
    p_n=\|x_n-x\|^2+\gamma_{n-1} \mu \|x_n-y_{n-1}\|^2+(2-5\eta)\|x_n-x_{n-1}\|^2+2 \gamma_{n-1}\alpha_{n-1}+2\frac{\gamma_{n-1}^2}{\eta} \|r_{n-1}-By_{n-1}\|^2.
\end{equation}
Then, by taking the conditional expectation with respect to $\EuScript{F}_n$ both 
sides of \eqref{e:ww3} and using $\E[r_n|\EuScript{F}_n] = By_n$, we get
\begin{align}
    \E [p_{n+1}&|\EuScript F_n] \le p_n-(\frac{\gamma_n}{\gamma_{n-1}}-\gamma_n \mu (\sqrt 2+1))\E[\|x_{n+1}-y_n\|^2|\EuScript F_n]-(1-\gamma_0 \mu (1+\sqrt 2)-10 \eta)\|x_n-x_{n-1}\|^2 \notag \\
    &+2(\gamma_{n-1}-\gamma_n)M+4\frac{\gamma_{n}^2}{\eta} \E[\|r_{n}-By_{n}\|^2|\EuScript F_n]-\E[\epsilon_n|\EuScript F_n]\label{e:ww5}
\end{align}
Note that, 
\begin{equation}
\begin{cases} \frac{\gamma_n}{\gamma_{n-1}}-\gamma_n \mu (\sqrt 2+1)>0,\\
1-\gamma_0 \mu (1+\sqrt 2)-10\eta>0,\\
\sum_{n \in \NN}(\gamma_{n-1}-\gamma_n)M =\gamma_0M
\end{cases}
\end{equation}
Similar to \eqref{Tn}, we have $p_n$ is a nonnegative sequence.
In turn,   Lemma \ref{lm1} and \eqref{e:ww5} give,
\begin{equation}\label{e:ww6}
 p_n \to \bar p, \ \ \|x_n-x_{n-1}\| \to 0\ \  \text{and}\ \  \sum \limits_{n \in \NN}\E[\epsilon_n|\EuScript F_n] <+\infty\ \  \ \ a.s.
\end{equation}
Using the same argument as the proof of \ref{tr1i},
\begin{equation}\label{e:ww8}
    \lim_{n\to\infty}\|x_n-x\|^2 = \bar p.\\
\end{equation}
Now, let us consider the case where
$A$ is $\phi_A-$uniformly monotone. We then derive from \eqref{e:ww6} that
\begin{equation}
    \sum \limits_{n \in \NN} \gamma_n \E[\phi_A(\|x_{n+1}-x\|)|\EuScript F_n]<+\infty,
\end{equation}
hence Corollary \ref{hq1} impies that
\begin{equation}
    \sum_{n \in \NN} \gamma_n \phi_A(\|x_{n+1}-x\|) <+\infty. \label{e:ww7}
\end{equation}
Since $  \sum_{n \in \NN} \gamma_n = \infty$, it follows from \eqref{e:ww7} that
$\underline{\lim} \phi_A(\|x_{n+1}-x\|)=0$.  Thus, there exists a subsequence $(k_n)_{n \in \NN}$ such that $\phi_A (\|x_{k_n}-x\|) \to 0$ and hence $\|x_{k_n}-x\| \to 0$. Therefore, by \eqref{e:ww8}, we obtain $x_n \to x$. We next consider that case when
 $B$ is $\phi_B-$uniformly monotone. 
 Since $y_n=2x_n-x_{n-1}$, by the triangle inequality,
 \begin{equation}
 \|x_n-x\|-\|x_n-x_{n-1}\| \le \|y_n-x\| \le \|x_n-x\|+\|x_{n-1}-x_n\|,
 \end{equation}
 and by \eqref{e:ww6}, we obtain $\lim \limits_{n \to \infty} \|y_n-x\|=\lim \limits_{n \to \infty} \|x_n-x\|$. Hence, by using the same argument as the case $A$ is uniformly monotone, we obtain $y_n \to x$ and hence $x_n \to x$. The proof of the theorem is complete.
\end{proof}

\begin{remark}
 For $0<\gamma<\dfrac{\sqrt 2-1}{\mu}$, $\dfrac{1}{2-\gamma \mu (1+\sqrt 2)}<c<1 $. Then for every $(\gamma_n)_{n \in \NN} \subset \left[c \gamma, \gamma\right]^\NN$, we have $\tau=\underset{n \in \NN}{\text{inf}}(\dfrac{2}{\gamma_n}-\dfrac{1}{\gamma_{n-1}}-\mu (1+\sqrt 2))>0$
\end{remark}

\begin{corollary}

Let $\gamma \in \left]0,(\sqrt{2}-1)/\mu \right[$. 
Let
$ x_0,x_{-1}$ be $\HH$-valued, squared integrable random variables.
\begin{equation} 
 (\forall n\in\NN)\quad
\begin{array}{l}
\left\lfloor
\begin{array}{l}
y_n = 2x_n-x_{n-1}\\
\E[r_n|\EuScript{F}_n] = By_n \\
x_{n+1} = J_{\gamma A}(x_n-\gamma r_n).
\end{array}
\right.\\[2mm]
\end{array}\\
\end{equation}
Suppose that 
\begin{equation}
     \sum_{n\in\mathbb{N}}\mathsf{E}[\|r_n-By_n\|^2|\EuScript{F}_n]<+\infty\ \  a.s.\\
\end{equation}
Then  $(x_n)_{n\in\NN}$ converges weakly to a random variable $\overline{x}\colon\Omega\to\zer(A+B)$ a.s.
\end{corollary}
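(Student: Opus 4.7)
The plan is to observe that this corollary is essentially the constant step size specialization of Theorem \ref{tr1}\ref{tr1i}, so the entire argument reduces to verifying that the hypotheses of that theorem are met when $\gamma_n \equiv \gamma$.

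First I would note that any constant sequence is trivially nondecreasing, and the assumption $\gamma \in \left]0,(\sqrt{2}-1)/\mu\right[$ places it in the required interval $\left]0,(\sqrt{2}-1)/\mu\right[$. Next I would compute the quantity
\begin{equation*}
\tau \;=\; \inf_{n\in\NN}\Big(\tfrac{2}{\gamma_n}-\tfrac{1}{\gamma_{n-1}}-\mu(1+\sqrt{2})\Big) \;=\; \tfrac{1}{\gamma}-\mu(1+\sqrt{2}),
\end{equation*}
and observe that $\gamma < (\sqrt{2}-1)/\mu = 1/\bigl(\mu(1+\sqrt{2})\bigr)$ is exactly the condition that makes $\tau > 0$. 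The stochastic oracle condition $\sum_{n}\E[\|r_n-By_n\|^2|\EuScript{F}_n]<\infty$ a.s.\ is the same in both statements, and $\EuScript{F}_n = \sigma(x_0,\ldots,x_n)$ as well. Hence all the hypotheses of Theorem \ref{tr1}\ref{tr1i} are fulfilled, and its conclusion yields the weak a.s.\ convergence of $(x_n)_{n\in\NN}$ to a $\zer(A+B)$-valued random variable $\overline{x}$.

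The main (and only) obstacle is really just the algebraic check that $\gamma < (\sqrt{2}-1)/\mu$ is equivalent to $\tau > 0$ in the constant-step case; no new estimate is required beyond what has already been proved. I therefore expect the proof to consist of at most a few lines verifying the threshold on $\gamma$ and invoking Theorem \ref{tr1}\ref{tr1i}.
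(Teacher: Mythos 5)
Your proposal is correct and matches the paper's intent exactly: the corollary is stated without proof as the constant-step specialization of Theorem \ref{tr1}\ref{tr1i}, and the only thing to check is the identity $(\sqrt{2}-1)/\mu = 1/\bigl(\mu(1+\sqrt{2})\bigr)$, which makes $\tau = \tfrac{1}{\gamma}-\mu(1+\sqrt{2})>0$ precisely when $\gamma<(\sqrt{2}-1)/\mu$. Your verification of this threshold is exactly the argument the paper relies on (compare the remark preceding the corollary).
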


\begin{theorem}\label{t:2}
Suppose that $\AAA$ is $\nu$-strongly monotone. Define 
\begin{equation}\label{e:step}
  \forall n\in\NN)\quad  \gamma_{n} = \frac{1}{2\nu(n+1)}.
\end{equation}
Suppose that there exists a  constant $c$ such that 
\begin{equation}
(\forall n\in\NN)\; \mathsf{E}[\|\sss_n-\QQ\yy_n\|^2|\EuScript{F}_n]  \leq c.
\end{equation}
Then
\begin{equation}
(\forall n> n_0)\E\left[\|\xx_n-\overline{\xx}\|^2\right] = \mathcal{O}(\log(n+1)/(n+1)),
\end{equation}
where $n_0$ is the smallest integer such that $ n_0 > 4\nu^{-1}\mu(1+\sqrt{2})$.
\end{theorem}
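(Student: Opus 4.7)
The plan is to adapt the machinery of Lemma \ref{l:main} and Lemma \ref{l:Qes} to the strongly monotone case, with a careful weighting that exploits the specific form of $\gamma_n$.

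First I would apply Lemma \ref{l:main} with $\phi_A(t)=\nu t^2$ and $\phi_B\equiv 0$, so that the left‐hand side of \eqref{e:mainesti} picks up the extra term
$2\gamma_n\nu\|\xx_{n+1}-\xx\|^2+2\gamma_n\nu\|\xx_{n+1}-\xx_n\|^2$.
Dividing through by $\gamma_n$, the inequality becomes an estimate on $\tfrac{1}{\gamma_n}\|\xx_{n+1}-\xx\|^2+2\nu\|\xx_{n+1}-\xx\|^2$. The crucial algebraic identity enabled by \eqref{e:step} is
$\tfrac{1}{\gamma_n}+2\nu=\tfrac{1}{\gamma_{n+1}}$, which turns the strong monotonicity contraction into a clean step in the ``rescaled'' quantity $\tfrac{1}{\gamma_n}\|\xx_n-\xx\|^2$. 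This is the backbone of the argument.

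Next I would handle the cross terms on the right of \eqref{e:mainesti} exactly as in the proof of Theorem \ref{tr1}\ref{tr1i}: split $r_{n-1}-r_n=(r_{n-1}-\BB\yy_{n-1})+(\BB\yy_{n-1}-\BB\yy_n)+(\BB\yy_n-r_n)$, apply Cauchy–Schwarz with a free parameter $\chi$ to the stochastic pieces, and apply \eqref{e:Qy} to the Lipschitz piece. Taking conditional expectation with respect to $\EuScript{F}_n$ kills the zero‐mean terms (using $\E[r_n-\BB\yy_n\mid\EuScript{F}_n]=0$ and likewise $\E[\beta_n\mid\EuScript{F}_n]=0$), and the variance bound replaces $\E[\|r_n-\BB\yy_n\|^2\mid\EuScript{F}_n]$ by $c$. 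The end product will be a recursion for a Lyapunov function of the form
\begin{equation*}
V_n=\tfrac{1}{\gamma_n}\|\xx_n-\xx\|^2+\mu\|\xx_n-\yy_{n-1}\|^2+\bigl(\tfrac{1}{\gamma_{n-1}}+\mu(1+\sqrt{2})\bigr)\|\xx_n-\xx_{n-1}\|^2+2\alpha_{n-1},
\end{equation*}
essentially the $T_n$ of Lemma \ref{l:Qes}, satisfying
$\E[V_{n+1}\mid\EuScript{F}_n]\le V_n-(\text{nonnegative terms})+\tfrac{K c}{\nu(n+1)}$
for some constant $K$, once $n>n_0$. The threshold $n_0>4\nu^{-1}\mu(1+\sqrt{2})$ is needed so that the coefficients of $\|\xx_{n+1}-\yy_n\|^2$ and $\|\xx_n-\xx_{n-1}\|^2$ appearing after the Cauchy–Schwarz splits are nonnegative, i.e.\ so that the ``contraction surplus'' produced by \eqref{e:step} dominates the Lipschitzian cross terms from \eqref{e:Qy}.

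Taking full expectations and iterating this one‐step inequality from $n_0$ gives
$\E[V_n]\le \E[V_{n_0}]+Kc\,\nu^{-1}\sum_{k=n_0}^{n-1}\tfrac{1}{k+1}\le \E[V_{n_0}]+Kc\,\nu^{-1}\log(n+1)$,
so $\E[V_n]=\mathcal{O}(\log(n+1))$. Invoking the lower bound \eqref{Tn} of Lemma \ref{l:Qes}, namely $V_n\ge\tfrac{1}{2\gamma_n}\|\xx_n-\xx\|^2=\nu(n+1)\|\xx_n-\xx\|^2$, and dividing yields $\E[\|\xx_n-\xo\|^2]=\mathcal{O}(\log(n+1)/(n+1))$, which is the claim.

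The main obstacle will be the bookkeeping in step two: one has to choose the auxiliary parameter $\chi$ (and absorb the $\gamma_n/\gamma_{n-1}$ ratios, which are now $(n+1)/n\to 1$ rather than $\le 1$) so that every geometric coefficient in the one‐step recursion is nonnegative uniformly for $n>n_0$, while simultaneously ensuring that the residual noise driver on the right is $\mathcal{O}(1/n)$ rather than $\mathcal{O}(1)$. Getting this balance right is exactly what forces the explicit lower bound on $n_0$ and produces the critical $\log/n$ rate; any smaller contraction constant would yield only an algebraic rate, and any larger one would eliminate the logarithm.
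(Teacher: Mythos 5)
Your proposal follows essentially the same route as the paper's own proof: apply Lemma \ref{l:main} with $\phi_B=0$ and $\phi_A=\nu|\cdot|^2$, use the identity $\tfrac{1}{\gamma_n}+2\nu=\tfrac{1}{\gamma_{n+1}}$ to convert the strong-monotonicity surplus into a step of the rescaled Lyapunov function $\TT_n$ from Lemma \ref{l:Qes}, absorb the Lipschitz cross term via \eqref{e:Qy} and the noise terms via Cauchy--Schwarz with $\gamma_n$-proportional weights so that the residual driver is $\mathcal{O}(\gamma_n c)$, telescope in expectation for $n>n_0$, and finish with the lower bound \eqref{Tn}. The paper's bookkeeping of the noise terms (grouping them into $\rho_n$ and choosing the weights $2\gamma_{n-1}$ and $16\gamma_n$ so that the fractions $\tfrac{1}{2\gamma_{n-1}}$ and $\tfrac{1}{4\gamma_n}$ of the squared increments are absorbed, yielding the $18\gamma_{n-1}c$ driver) is exactly the balance you identify as the main obstacle, so your plan is correct and matches the paper's argument.
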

\begin{proof} Let $n\in\NN$.
It follows from \eqref{e:step} that 
\begin{align}
    1+2\nu\gamma_n = \frac{2\nu(n+2)}{2\nu(n+1)}= \frac{\gamma_n}{\gamma_{n+1}}.
\end{align}
Set \begin{equation}\label{e:ro}
\begin{cases}
    \rho_{1,n}& =  \scal{\sss_n-\BB\yy_n}{\xx-\yy_n}+\scal{\sss_{n-1}- \BB\yy_{n-1}}{\xx_{n}-\xx_{n-1}}
-\scal{ \sss_n -\BB y_n}{\xx_{n+1}-\xx_{n}},\\
\rho_{2,n} &= \scal{r_{n-1}-By_{n-1}-r_n + By_n}{x_{n+1}-y_n},\\
\rho_{n}& = \rho_{1,n} +\rho_{2,n}.
\end{cases}
\end{equation}
Hence, by applying  Lemma \ref{l:main} with $\phi_B =0$ and $\phi_A = \nu |\cdot|^2$, we obtain 
\begin{align} 
 &(1+2\nu_A\gamma_n)\|x_{n+1}-x\|^2+(3+2\nu\gamma_n-\frac{\gamma_n}{\gamma_{n-1}}) \|x_{n+1}-x_n\|^2+\frac{\gamma_n}{\gamma_{n-1}}\|x_{n+1}-y_n\|^2+2\gamma_n\alpha_n \notag \\
     & \le \|x_n-x\|^2+2\gamma_n\alpha_{n-1} +2\gamma_n \scal{By_{n-1}- By_n}{x_{n+1}-y_n}+\frac{\gamma_n}{\gamma_{n-1}}\|x_n-y_n\|^2 +2\gamma_n\rho_{n}. 
\label{e:mainestim}
\end{align}
We derive from Lemma \ref{l:Qes} and \eqref{e:mainestim} that
\begin{align} 
&\frac{1}{\gamma_{n+1}}\|\xx_{n+1}-\xx\|^2 
 +(\frac{1}{\gamma_{n-1}}-\mu\sqrt{2})\|\xx_{n+1}-\yy_n\|^2
+\big( \frac{3}{\gamma_n}+2\nu-\frac{1}{\gamma_{n-1}}\big) \|\xx_n-\xx_{n+1}\|^2
+2 \alpha_n
\notag\\
&\leq \frac{1}{\gamma_n} \|\xx_{n}-\xx\|^2 +(\frac{1}{\gamma_{n-1}}+ \mu(1+\sqrt{2}))\|\xx_{n}-\xx_{n-1}\|^2+\mu \|\xx_n-\yy_{n-1}\|^2
+2 \alpha_{n-1}+2\rho_n.
\label{e:later1}
\end{align}
Now, using the definition of $\TT_n$, we can rewrite \eqref{e:later1} as
\begin{align}
     \TT_{n+1} &\leq \TT_n + 2\rho_n - (\frac{1}{\gamma_{n-1}}-\mu(\sqrt{2}+1))\|\xx_{n+1}-\yy_n\|^2\notag\\
    & -\big( \frac{2}{\gamma_n}+2\nu-\frac{1}{\gamma_{n-1}} -\mu(\sqrt{2}+1)\big) \|\xx_n-\xx_{n+1}\|^2
\end{align}
Let us rewrite $\rho_{2,n}$ as
\begin{align}
    \rho_{2,n} &= \scal{r_{n-1}-By_{n-1}}{x_{n+1}-x_n} 
   - \scal{r_{n-1}-By_{n-1}}{x_{n}-x_{n-1}}\notag\\
   &\quad -\scal{r_n - By_n}{x_{n+1}-x_n}+ \scal{r_n - By_n}{x_{n}-x_{n-1}},
\end{align}
which implies that
\begin{equation}\label{e:newrho}
    \rho_n = \scal{\sss_n-\BB\yy_n}{\xx-x_{n}} 
    + \scal{r_{n-1}-By_{n-1}}{x_{n+1}-x_{n}}- 2 
    \scal{r_n - By_n}{x_{n+1}-x_n}.
\end{equation}
Taking the conditional expectation with respect to $\EuScript{F}_n$, we obtain
    \begin{align}
     \E[\TT_{n+1}|\mathcal{F}_n] &\leq \TT_n + 2\E[\rho_n|\EuScript{F}_n] - (\frac{1}{\gamma_{n-1}}-\mu(\sqrt{2}+1))\E[\|\xx_{n+1}-\yy_n\|^2|\EuScript{F}_n]\notag\\
    & -\big( \frac{2}{\gamma_n}+2\nu-\frac{1}{\gamma_{n-1}} -\mu(\sqrt{2}+1)\big) \E[\|\xx_n-\xx_{n+1}\|^2|\EuScript{F}_n].\label{e:weekend}
\end{align}
By the definition of $\rho_n$ in \eqref{e:newrho}, we have 
\begin{align}
    2\E[\rho_n|\EuScript{F}_n] 
    &= 2\E[\scal{\sss_{n-1}- \BB\yy_{n-1}}{\xx_{n+1}-\xx_n}|\EuScript{F}_n]
    -4\E[\scal{\sss_{n}- \BB\yy_{n}}{\xx_{n+1}-\xx_n}|\EuScript{F}_n]
    \notag\\
    &\leq 2\gamma_{n-1}\E[\|\sss_{n-1}- \BB\yy_{n-1}\|^2|\EuScript{F}_n]
   + \frac{1}{2\gamma_{n-1}}\E[\| \xx_{n+1}-\xx_n\|^2|\EuScript{F}_n]\notag\\
   &\quad + 16\gamma_{n}\E[\|\sss_{n}- \BB\yy_{n}\|^2|\EuScript{F}_n]
    + \frac{1}{4\gamma_{n}}\E[\| \xx_{n+1}-\xx_n\|^2|\EuScript{F}_n]
\end{align}
In turn, it follows from \eqref{e:weekend} that 
    \begin{align}
     \E[\TT_{n+1}|\EuScript{F}_n] &\leq \TT_n  - (\frac{1}{\gamma_{n-1}}-\mu(\sqrt{2}+1))\E[\|\xx_{n+1}-\yy_n\|^2|\EuScript{F}_n]\notag\\
    & -\big( \frac{1}{4\gamma_n} -\mu(\sqrt{2}+1)\big) \E[\|\xx_n-\xx_{n+1}\|^2|\EuScript{F}_n] + 18\gamma_{n-1}c.
    \label{e:weekend1}
\end{align}
Note that for $n > n_0$, $\frac{1}{4\gamma_{n}}- \mu(1+\sqrt{2})\geq 0$, and hence taking 
expectation both the sides of \eqref{e:weekend1}, we obtain
\begin{equation}
    (\forall n > n_0)\; \E\left[\TT_{n+1}\right] \leq \E\left[\TT_{n_0}\right] 
    +c \sum_{k=n_0}^n \gamma_k, 
\end{equation}
which proves the desired result by invoking Lemma \ref{l:Qes}.
\end{proof}
\begin{remark}\label{re:2} We have some comparisons to existing work.
\begin{enumerate}
    \item Under the standard condition \eqref{e:Stand}, we obtain the strong almost sure convergence of the iterates, when $A$ or $B$ is uniformly monotone, as in the context of the stochastic forward-backward splitting \cite{Jota2}. In the general case, 
    to ensure the weak almost sure convergence, 
    we not only need the step-size bounded away from $0$ but also the summable condition in
    \eqref{e:sum}. These conditions were used in \cite{Combettes2015,Combettes2016,bang1,bang4}. 
    \item 
     In the case when $A$ is a normal cone in Euclidean spaces and the weak sharpness of $B$ is satisfied, as it was shown in \cite[Proposition1]{Cui16}, the strong 
    almost sure convergence of $(x_n)_{n\in\NN}$ is obtained under the condition \eqref{e:Stand}.
    Without imposing additional conditions on $B$ such as weak sharpness \cite{Cui16}, uniform monotonicity  \cite{Jota2}, the problem of proving the almost sure convergence 
    of the iterates under the condition \eqref{e:Stand} is still open.
    \item When $A$ is strongly monotone, we obtained the rate $\mathcal{O}(\log(n+1)/(n+1))$
    which is slower than the rate $\mathcal{O}(1/(n+1))$ of 
    the stochastic forward-backward splitting \cite{Jota2} and their extensions in \cite{bang3,bang8} . The main reason is the 
    monotonicity and Lipschitzianity of $B$ is weaker than the cocoercivity of $B$ as in 
    \cite{Jota2,bang1}.  
    \item  In the case when $A$ is a normal cone to a nonempty closed convex set $X$ in Euclidean spaces, the work in \cite{Cui16} obtained the rate $1/\sqrt{n}$ of the gap function defined by $X\ni x\mapsto \sup_{y\in X}\scal{By}{x-y}$. This rate of convergence was firstly established in \cite{Nem11} for solving variational inequalities with stochastic mirror-prox algorithm. Therefore, they differ from our results in the present paper.
\end{enumerate}
\end{remark}
We provide an generic special case which was widely studied in the stochastic optimization; see 
\cite{Atchade17,bang7,Duchi09,Bach14,Lan12, Kwok09} for instances.
\begin{corollary}\label{co:1}
Let $f\in\Gamma_0(\HH)$ and
let $h\colon\HH\to\RR$ be a convex differentiable function, with $\mu$-Lipschitz continuous gradient, given by an  expectation form $h(x)= \E_\xi [H(x,\xi)]$. In the expectation,  $\xi$ is a random vector whose probability distribution  is supported on a set  $\Omega_P \subset \RR^m$, and $H\colon \HH\times\Omega_p\to \RR$ is convex function with respect to the variable $x$. The problem is to
\begin{equation}\label{e:probapp}
\underset{  x\in \mathcal{H}}{\text{minimize}} \; f(x) +h(x),
\end{equation}
under the following assumptions: 
\begin{enumerate}
\item $\zer(\partial f +\nabla h)\not=\emp$.
\item It is possible to obtain independent and identically distributed (i.i.d.) samples  $(\xi_n)_{n\in\NN}$ of $\xi$.
\item { Given $(x,\xi)\in  \mathcal{H}\times\Omega_P$},
one can find a point $\nabla H(x,\xi)$ such that $ \E[\nabla H(x,\xi)] = \nabla h(x)$. 
\end{enumerate}
Let $(\gamma_n)_{n\in\NN}$ be a sequence in $\left]0,\pinf\right[$. Let
$ x_0,x_{-1}$ be in $\HH$.
\begin{equation}  \label{algo:app1}
 (\forall n\in\NN)\quad
\begin{array}{l}
\left\lfloor
\begin{array}{l}
y_n = 2x_n-x_{n-1}\\
x_{n+1} = \prox_{\gamma_n f}(x_n-\gamma_n \nabla H(y_n,\xi_n)).
\end{array}
\right.\\[2mm]
\end{array}\\
\end{equation}
Then, the following hold.
\begin{enumerate}
    \item If $f$ is $\nu$-strongly monotone, for some $\nu\in\left]0,+\infty\right[$,
and there exists a   constant $c$ such that 
\begin{equation}
 \E[\|\nabla H(y_n,\xi_n)-\nabla h(\yy_n)\|^2|\xi_0,\ldots, \xi_{n-1}] \leq c.
\end{equation}
Then,  for the learning rate $(\forall n\in\NN)\; \gamma_{n} = \frac{1}{2\nu(n+1)}$. We obtain
\begin{equation}
(\forall n> n_0)\E\left[\|\xx_n-\overline{\xx}\|^2\right] = \mathcal{O}(\log(n+1)/(n+1)),
\end{equation}
where $n_0$ is the smallest integer such that $ n_0 > 2\nu^{-1}\mu(1+\sqrt{2})$, and $\overline{x}$ is the unique solution to \eqref{e:probapp}.
\item If $f$ is not strongly monotone, let $(\gamma_n)_{n\in\NN}$ be a non-decreasing sequence in $\left]0, \frac{\sqrt 2-1}{\mu}\right[$, satisfies $\tau= \underset{n \in \NN}{\text{inf}}(\dfrac{2}{\gamma_n}-\dfrac{1}{\gamma_{n-1}}-\mu (1+\sqrt 2))>0 $ and 
\begin{equation}
 \sum_{n\in\mathbb{N}}\E[\nabla H(y_n,\xi_n)-\nabla h(\yy_n)\|^2|\xi_0,\ldots, \xi_{n-1}]<+\infty\ \  a.s
 \end{equation}
  Then  $(x_n)$ converges weakly to a random variable $\overline{x}\colon\Omega\to\zer(\partial f+\nabla h)$ a.s.
    \end{enumerate}
\end{corollary}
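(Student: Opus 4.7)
The plan is to observe that Corollary \ref{co:1} is the specialization of the main results obtained by taking $A = \partial f$ and $B = \nabla h$. First I would verify that this identification legitimately places us in the framework of \eqref{p:primal-dual}: since $f\in\Gamma_0(\HH)$, the subdifferential $\partial f$ is maximally monotone and its resolvent is the proximity operator, $J_{\gamma \partial f} = \prox_{\gamma f}$; since $h$ is convex and differentiable with $\mu$-Lipschitz continuous gradient, $\nabla h$ is monotone and $\mu$-Lipschitzian by the Baillon--Haddad theorem (and hence a fortiori monotone and Lipschitz); and by Fermat's rule the set $\zer(\partial f + \nabla h)$ coincides with the set of minimizers of $f+h$, which is nonempty by assumption~(i). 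Under this identification, Algorithm \eqref{algo:app1} is exactly Algorithm \ref{maina} with $r_n := \nabla H(y_n,\xi_n)$, and hypothesis~(iii) together with the $\EuScript F_n$-measurability of $y_n$ and the i.i.d.~nature of the samples $(\xi_n)$ yields
\begin{equation*}
\E[r_n\mid \EuScript F_n] = \E_{\xi_n}[\nabla H(y_n,\xi_n)\mid \EuScript F_n] = \nabla h(y_n) = By_n.
\end{equation*}

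For part (i), I would invoke the standard fact that $\nu$-strong convexity of $f$ implies $\nu$-strong monotonicity of $\partial f$, so that $A$ satisfies the hypothesis of Theorem \ref{t:2} with $\phi_A = \nu|\cdot|^2$. The uniform bound $\E[\|\nabla H(y_n,\xi_n)-\nabla h(y_n)\|^2\mid \xi_0,\ldots,\xi_{n-1}] \le c$ is exactly the bounded conditional variance assumption of Theorem \ref{t:2}. Applying that theorem with the step size $\gamma_n = 1/(2\nu(n+1))$ then yields the $\mathcal{O}(\log(n+1)/(n+1))$ rate in expectation, and uniqueness of $\overline{x}$ follows from strong convexity of $f+h$.

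For part (ii), the step-size prescription and the almost sure summability hypothesis on the conditional second moments $\E[\|r_n-By_n\|^2\mid\EuScript F_n]$ are copied verbatim from Theorem \ref{tr1}\ref{tr1i}, so that result applies without modification and delivers almost sure weak convergence of $(x_n)_{n\in\NN}$ to a $\zer(\partial f+\nabla h)$-valued random variable.

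Since this is essentially a verification exercise that packages the abstract results in the familiar language of composite minimization, there is no substantive obstacle. The only points worth attention are the correct translation of the unbiasedness condition into the $\EuScript F_n$-conditional expectation framework (which requires using the independence of $\xi_n$ from $\EuScript F_n = \sigma(x_0,\ldots,x_n)$), and tracking the precise numerical constant in the threshold $n_0$ as it is inherited from the proof of Theorem \ref{t:2}.
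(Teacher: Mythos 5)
Your proposal is correct and follows exactly the paper's own route: the paper's proof is a one-line reduction setting $A=\partial f$, $B=\nabla h$, $r_n=\nabla H(y_n,\xi_n)$ and citing Theorems \ref{tr1} and \ref{t:2}, and you simply spell out the verifications (maximal monotonicity of $\partial f$, monotonicity and Lipschitz continuity of $\nabla h$, Fermat's rule, and the conditional unbiasedness) that the paper leaves implicit. Your closing remark about tracking the constant in $n_0$ is well taken, since Theorem \ref{t:2} states $n_0>4\nu^{-1}\mu(1+\sqrt{2})$ while the corollary states $n_0>2\nu^{-1}\mu(1+\sqrt{2})$, a discrepancy in the paper rather than in your argument.
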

\begin{proof} The conclusions are followed from Theorem \ref{tr1} $\&$ \ref{t:2} where 
\begin{equation}
    A =\partial f, B =\nabla h,\; \text{and}\; (\forall n\in\NN)\; r_n = \nabla H(y_n,\xi_n).
\end{equation}
\end{proof}
\begin{remark} The algorithm \eqref{algo:app1} as well as the convergence results appear to be new. Algorithm \eqref{algo:app1} is different from 
the standard stochastic proximal gradient 
\cite{Atchade17,bang7,Duchi09,Bach14}  only  the evaluation of the stochastic gradients
at the reflections $(y_n)_{n\in\NN}$.
\end{remark}
\section{Ergodic convergences}
In this section, we focus on the class of primal-dual problem which was firstly investigated
in \cite{plc6}. This typical structured primal-dual framework covers a widely class of convex
optimization problems and it has found many applications to image processing, machine learning
\cite{plc6,plc7,Pesquet15,Cham16,Ouy13}.
We further exploit the duality nature of this framework 
to obtain a new stochastic primal-dual splitting method and focus on the ergodic convergence
of the primal-dual gap.
\begin{problem}\label{app2}
 Let $f\in\Gamma_0(\HH)$, $g\in\Gamma_0(\GG)$ and
let $h\colon\HH\to\RR$ be a convex differentiable function, with $\mu_h$-Lipschitz continuous gradient, given by an  expectation form $h(x)= \E_\xi [H(x,\xi)]$. In the expectation,  $\xi$ is a random vector whose probability distribution $P$ is supported on a set  $\Omega_p \subset \RR^m$, and $H\colon \HH\times\Omega\to \RR$ is convex function with respect to the variable $x$.
Let $\ell\in\Gamma_0(\GG)$ be a convex differentiable function with $\mu_\ell$-Lipschitz continuous gradient, and given by an  expectation form $\ell(v)= \E_\xi [L(v,\xi)]$. In the expectation,  $\zeta$ is a random vector whose probability distribution  is supported on a set $\Omega_D \subset \RR^d$, and $L\colon \GG\times\Omega_D\to \RR$ is convex function with respect to the variable $v$. Let $K\colon\HH\to\GG$ be a bounded linear operator. 
The primal problem is to 
\begin{align}
\underset{  x\in \mathcal{H}}{\text{minimize}} \; h(x)+(\ell^*\vuo g)(Kx)+f(x), \label{e:primalap}
\end{align}
and the dual problem is to 
\begin{align}
\underset{  v\in \mathcal{G}}{\text{minimize}} \; (h+f)^*(-K^*v)+g^*(v) +\ell(v), \label{e:dualap}
\end{align}
under the following assumptions: 
\begin{enumerate}
\item There exists a point $(x^\star,v^\star)\in\HH\times\GG$ such that the primal-dual gap function defined by
\begin{align}
G:&\mathcal{H}\times\mathcal{G} \to \mathbb R \cup \{-\infty,+\infty\} \notag\\
&(x,v) \mapsto h(x)+f(x)+\scal{Kx}{v} -g^*(v) -\ell(v) \label{s}
\end{align}
verifies the following condition:
\begin{align}\label{e:saddle}
\big(\forall x\in\HH\big)(\big(\forall v\in\GG\big)\;
G(x^\star,v) \leq G(x^\star,v^\star) \leq G(x,v^{\star}),
\end{align}
\item It is possible to obtain independent and identically distributed (i.i.d.) samples  $(\xi_n,\zeta_n)_{n\in\NN}$ of $(\xi,\zeta)$.
\item Given $(x,v,\xi,\zeta)\in \HH\times\GG\times\Omega_P\times\Omega_D$,
one can find a point $(\nabla H(x,\xi), \nabla L(v,\xi))$ such that 
\begin{equation}
\E_{(\xi,\zeta)}[(\nabla H(x,\xi),\nabla L(v,\zeta))] = (\nabla h(x),\nabla \ell(v)). 
\end{equation}
\end{enumerate}

\noindent Using the standard technique as in \cite{plc6}, 
we derive from \eqref{algo:app1} the following
stochastic primal-dual splitting method,
Algorithm \ref{al:spd}, for solving Problem \ref{app2}. The weak almost sure convergence and the convergence in expectation of the resulting algorithm can be derived easily from Corollary \ref{co:1} and hence we omit them here.

\begin{algorithm}\label{al:spd} 
Let $(x_0,x_{-1})\in\HH^2$ and  $(v_0,v_{-1})\in\GG^2$. Let 
$(\gamma_n)_{n\in\NN}$ be a non-negative sequence. Iterates
\begin{equation}\label{e:spd}
\begin{array}{l}
\operatorname{For}\; n=0,1\ldots,\\
\left\lfloor
\begin{array}{l}
	y_n=2x_n-x_{n-1}\\
	u_n = 2v_n-v_{n-1}\\
	x_{n+1}=\text{prox}_{\gamma_n f}(x_n-\gamma_n \nabla H(y_n,\xi_n)  -\gamma_n K^* u_n)\\
	v_{n+1}=\text{prox}_{\gamma_n g^*}(v_n-\gamma_n\nabla L(u_n,\zeta_n) + \gamma_n Ky_n)\\
	\end{array}
\right.\\[2mm]
\end{array}\
	\end{equation}
\end{algorithm}
\begin{theorem} Let $x_0=x_{-1}$, $v_0=v_{-1}$. 
Set $\mu = 2\max\{\mu_h,\ \mu_\ell\}+\|K\|$, let $(\gamma_n)_{n\in\NN}$ be a decreasing sequence in $\left]0, \frac{1}{2\mu}\right[$ such that
\begin{equation}\label{e:sumgam}
    \; e_0=\sum_{n\in\NN}\gamma_{n}^{2}\E\left[\|\nabla H(y_n,\xi_n)-\nabla h(y_n)\|^2+\|\nabla L(u_n,\zeta_n)-\nabla \ell(u_n)\|^2\right] < \infty.
\end{equation} 
For every $N\in\NN$, define 
\begin{equation}
	\hat{x}_N= \bigg(\sum_{n=0}^N\gamma_n x_{n+1}\bigg)/\bigg(\sum_{n=0}^N\gamma_n\bigg)\; \text{and}\;
	\hat {v}_N=\bigg(\sum_{n=0}^N \gamma_nv_{n+1}\bigg)/\bigg(\sum_{n=0}^N\gamma_n\bigg).
	\end{equation}
	Assume that $\dom f$ and $\dom g^*$ are bounded.
Then the following holds:
\begin{equation}
\E[G(\hat{x}_{N},v)-G(x,\hat{v}_{N})] \leq \bigg(\frac12\|(x_0,v_0)-(x,v)\|^2+ \gamma_0 c(x,v) +e_0\bigg)\bigg/\bigg(\sum_{k=0}^N\gamma_k\bigg)^{-1}.
\end{equation}
where 
\begin{equation}
c(x,v)= \|K\|\underset{n \in \NN}{\text{sup}} \{ 
\mathsf{E}\left[|\scal{ x_{n+1}-x}{v_{n+1}-v_n}|\right] +\mathsf{E}\left[|\scal{x_{n+1}-x_n}{v_{n+1}-v}|\right] \} < \infty. \label{e:conc}
\end{equation}
\end{theorem}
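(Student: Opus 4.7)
The plan is to view Algorithm \ref{al:spd} as an instance of Algorithm \ref{maina} in the product space $\HHH=\HH\times\GG$, applied to the maximally monotone operator $\boldsymbol A\colon(x,v)\mapsto\partial f(x)\times\partial g^*(v)$ and the monotone Lipschitz operator $\boldsymbol B\colon(x,v)\mapsto\big(\nabla h(x)+K^*v,\ \nabla\ell(v)-Kx\big)$, with unbiased oracle $\boldsymbol r_n=\big(\nabla H(y_n,\xi_n)+K^*u_n,\ \nabla L(u_n,\zeta_n)-Ky_n\big)$. Under this identification, the task is to upgrade the purely monotone bound of Lemma \ref{l:main} into a sharper convex inequality that controls the primal--dual gap directly, and then to ergodic-average it.

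The cornerstone will be a per-iteration estimate of the form $G(x_{n+1},v)-G(x,v_{n+1})\le\Phi_n(x,v)-\Phi_{n+1}(x,v)+R_n+\scal{\ee_n}{(x_{n+1},v_{n+1})-(x,v)}$, where $\Phi_n(x,v)=\tfrac{1}{2\gamma_n}\|(x_n,v_n)-(x,v)\|^2$, $\ee_n=\big(\nabla h(y_n)-\nabla H(y_n,\xi_n),\ \nabla\ell(u_n)-\nabla L(u_n,\zeta_n)\big)$, and $R_n$ collects the descent-lemma residuals $\tfrac{\mu_h}{2}\|x_{n+1}-y_n\|^2+\tfrac{\mu_\ell}{2}\|v_{n+1}-u_n\|^2$ together with skew coupling cross-terms $\scal{K(x_{n+1}-x)}{v_n-v_{n+1}}+\scal{K(x_{n+1}-x_n)}{v_{n+1}-v}$ produced by substituting $y_n=2x_n-x_{n-1}$ and $u_n=2v_n-v_{n-1}$. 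To derive it, I combine the subgradient inequalities from the characterizations of $\prox_{\gamma_n f}$ and $\prox_{\gamma_n g^*}$, the descent lemma plus convexity applied to $h$ and $\ell$ (which produces precisely $\nabla h(y_n)$ and $\nabla\ell(u_n)$, matching the oracle in conditional expectation), and the elementary identity $\scal{Kx_{n+1}}{v}-\scal{Kx}{v_{n+1}}=\scal{K(x_{n+1}-x)}{v}-\scal{Kx}{v_{n+1}-v}$ for the coupling.

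I then multiply by $\gamma_n$, sum for $n=0,\dots,N$, and exploit three telescoping effects. First, $\Phi_n$ telescopes, contributing $\tfrac12\|(x_0,v_0)-(x,v)\|^2$ (using $x_0=x_{-1}$, $v_0=v_{-1}$). Second, the skew coupling telescopes up to boundary residuals at $n=N$ whose expected magnitude is uniformly bounded by $\gamma_0c(x,v)$ in \eqref{e:conc}, finite under the boundedness of $\dom f$ and $\dom g^*$. Third, the descent residuals $\gamma_n\mu_h\|x_{n+1}-y_n\|^2$ and $\gamma_n\mu_\ell\|v_{n+1}-u_n\|^2$ are absorbed by the polarization quadratic $\tfrac{1}{2\gamma_n}\|(x_{n+1},v_{n+1})-(x_n,v_n)\|^2$ thanks to $\gamma_n<1/(2\mu)$ with $\mu=2\max\{\mu_h,\mu_\ell\}+\|K\|$. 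The stochastic term is split as $\scal{\ee_n}{(x_n,v_n)-(x,v)}+\scal{\ee_n}{(x_{n+1}-x_n,v_{n+1}-v_n)}$: the first is zero-mean given $\EuScript{F}_n$, the second is handled by Young's inequality, with the squared-distance part again absorbed by the polarization and the $\|\ee_n\|^2$-weighted part summing to (a constant multiple of) $e_0$ via \eqref{e:sumgam}. Taking total expectation and invoking the convex--concave Jensen inequality (convexity of $G(\cdot,v)$ and concavity of $G(x,\cdot)$) to pass from the weighted average of $G(x_{n+1},v)-G(x,v_{n+1})$ to $G(\hat x_N,v)-G(x,\hat v_N)$ then yields the announced bound after dividing by $\sum_{k=0}^N\gamma_k$.

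The main obstacle will be the careful bookkeeping of the skew coupling terms produced by the extrapolations $y_n=2x_n-x_{n-1}$ and $u_n=2v_n-v_{n-1}$: they telescope only up to boundary residuals at $n=N$, which is precisely what the uniform bound $c(x,v)$ in \eqref{e:conc} is designed to absorb, and the step-size bound $\gamma_n<1/(2\mu)$ with the specific choice $\mu=2\max\{\mu_h,\mu_\ell\}+\|K\|$ is the tightest threshold under which both the descent-lemma remainders and the $\|K\|$-weighted coupling cross-terms can be simultaneously dominated by the polarization quadratic. Matching these coefficients correctly, and checking that the dropped positive telescopic remainders leave enough slack for Young's inequality on the stochastic term, is the most delicate calculation of the proof.
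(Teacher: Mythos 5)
Your overall strategy coincides with the paper's: derive a per-iteration bound on $G(x_{n+1},v)-G(x,v_{n+1})$ from the subgradient characterizations of the two proximal steps, the descent lemma plus convexity for $h$ and $\ell$ evaluated at the extrapolated points $y_n,u_n$, and the skew coupling identity; then multiply by $\gamma_n$, telescope the polarization quadratic and the skew terms (using the decrease of $(\gamma_n)_{n\in\NN}$ and the uniform bound $c(x,v)$ to absorb the non-telescoping residue $(\gamma_{n-1}-\gamma_n)d_n$ — note this residue accumulates over all $n$, it is not only a boundary term at $n=N$ as you describe, though the bound $\sum_n(\gamma_{n-1}-\gamma_n)\leq\gamma_0$ handles it either way), and finish with Jensen for the convex--concave $G$. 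The one genuinely different ingredient is your treatment of the stochastic error: you pair $\nabla h(y_n)-\nabla H(y_n,\xi_n)$ with $x_n-x$ (zero-mean) plus $x_{n+1}-x_n$ (Young's inequality), whereas the paper pairs it with the \emph{virtual} iterate $\bar x_{n+1}=\prox_{\gamma_n f}(x_n-\gamma_n\nabla h(y_n)-\gamma_nK^*u_n)$, which is $\EuScript{F}_n$-measurable so the cross term is zero-mean, and controls the residual by nonexpansiveness of the prox, $\|x_{n+1}-\bar x_{n+1}\|\leq\gamma_n\|\nabla H(y_n,\xi_n)-\nabla h(y_n)\|$, yielding exactly $\gamma_n^2\|\cdot\|^2$ per step and hence the constant $e_0$ with coefficient one. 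Your route is equally valid but consumes part of the strictly positive slack $\frac12-\gamma_0\mu$ in the coefficient of $\|\mathsf{x}_{n+1}-\mathsf{x}_n\|^2$, so it proves the theorem only with $e_0$ replaced by a constant multiple of $e_0$ depending on $(1-2\gamma_0\mu)^{-1}$; if you want the stated constant, adopt the virtual-iterate device. Your opening suggestion of invoking Lemma \ref{l:main} in the product space is not actually used in your argument (nor in the paper's), since the gap bound requires the function values, not just monotonicity.
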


\begin{proof}
We first note that \eqref{e:conc} holds because of the boundedness of 
$\dom f$ and $\dom g^*$.
Since $\ell$ is a convex, differentiable function with $\mu_\ell$-Lipschitz continuous gradient, using the descent lemma,  
\begin{equation}
\ell(u) \leq \ell(q) + \scal{\nabla \ell(q)}{u-q} + \frac{\mu_\ell}{2} \|u-q\|^2. \label{e:descent1}
\end{equation}
Since $\ell$ is convex, $\ell(q) \leq \ell(w) + \scal{\nabla \ell(q)}{q-w}$. Adding this inequality to  \eqref{e:descent1}, we obtain 
\begin{equation}
\ell(u) \leq \ell(w) + \scal{\nabla \ell(q)}{u-w} + \frac{\mu_\ell}{2} \|u-q\|^2 \label{e:descent2}.
\end{equation}
In particular, applying \eqref{e:descent2} with $u=v_{n+1}$, $w=v$ and $q=u_n$, we get
\begin{equation}\label{e:rr1}
    \ell(v_{n+1})\leq \ell(v) +
    \scal{ \nabla \ell(u_n)}{v_{n+1}-v}+\frac{\mu_\ell}{2}\|v_{n+1}-u_n\|^2.
\end{equation}
Moreover, it follows from 
\eqref{e:spd} that 
\begin{equation}
-(v_{n+1}-v_n+\gamma_n \nabla L(u_n,\zeta_n)-\gamma_n K y_n) \in
\gamma_n\partial g^*(v_{n+1}),
\end{equation}
and hence, using the convexity of $g^*$, 
\begin{equation}
    g^*(v)-g^*(v_{n+1})\geq
    \frac{1}{\gamma_n}\scal{v_{n+1}-v}{v_{n+1}-v_n+\gamma_n \nabla L(u_n,\zeta_n)-\gamma_n K y_n}.\label{e:rr2}
\end{equation}
Therefore, we derive from \eqref{e:rr1}, \eqref{e:rr2} and \eqref{s} that 
\begin{align}
G(x_{n+1},v)&-G(x_{n+1},v_{n+1})=\scal{K x_{n+1}}{v-v_{n+1}}  -g^*(v)+g^*(v_{n+1})-\ell(v)+\ell(v_{n+1}) \notag \\
& \le \scal{ K x_{n+1}}{v-v_{n+1}}+\dfrac{1}{\gamma_n }\scal{v-v_{n+1}}{v_{n+1}-v_n+\gamma_n \nabla L(u_n,\zeta_n)-\gamma_n K y_n } \notag \\ 
&\hspace{6cm}+\scal{ \nabla \ell(u_n)}{v_{n+1}-v}+\frac{\mu_\ell}{2}\|v_{n+1}-u_n\|^2 \notag \\
&=\scal {K(x_{n+1}-y_n)}{v-v_{n+1}} +\dfrac{1}{\gamma_n} \scal{v-v_{n+1}}{v_{n+1}-v_n}+ \frac{\mu_\ell}{2}\|v_{n+1}-u_n\|^2 \notag \\
&\hspace{6.7cm} + \scal{ \nabla \ell(u_n)-\nabla L(u_n,\zeta_n)}{v_{n+1}-v}. \label{1}
\end{align}
By the same way, since $h$ is convex differentiable with 
$\mu_h$-Lipschitz gradient, we have 
\begin{equation}
   h(x_{n+1})-h(x)
   \leq \scal{ \nabla h(y_n)}{x_{n+1}-x} +\dfrac{\mu_h}{2}\|x_{n+1}-y_n\|^2.
\end{equation}
Moreover,  it follows from 
\eqref{e:spd} that
\begin{equation}
   -( x_{n+1}-x_n+\gamma_n \nabla H(y_n,\xi_n)+\gamma_n K^*(u_n))
    \in\gamma_n\partial f(x_{n+1}),
\end{equation}
and hence, by the convexity of $f$,
\begin{equation}
    f(x_{n+1})-f(x)\leq \frac{1}{\gamma_n} \scal{x-x_{n+1}}{x_{n+1}-x_n+\gamma_n \nabla H(y_n,\xi_n)+\gamma_n K^*(u_n)}.
\end{equation}
In turn, using the definition of $G$ as in \eqref{e:saddle}, we have
\begin{align}
G(x_{n+1},v_{n+1})-G(x,v_{n+1})&=h(x_{n+1})-h(x)+\scal{ K(x_{n+1}-x)}{v_{n+1}}+f(x_{n+1})-f(x) \notag \\
&\le \scal{ \nabla h(y_n)}{x_{n+1}-x} +\dfrac{\mu_h}{2}\|x_{n+1}-y_n\|^2+\scal{K(x_{n+1}-x)}{v_{n+1}} \notag\\
&\ \ +\frac{1}{\gamma_n} \scal{x-x_{n+1}}{x_{n+1}-x_n+\gamma_n \nabla H(y_n,\xi_n)+\gamma_n K^*(u_n)} \notag\\
&=\scal{ K(x_{n+1}-x)}{v_{n+1}-u_n}+\frac{1}{\gamma_n} \scal{ x-x_{n+1}}{x_{n+1}-x_n} \notag \\
&\ \ +\dfrac{\mu_h}{2}\|x_{n+1}-y_n\|^2 
 +\scal{ \nabla h(y_n)-\nabla H(y_n,\xi_n)}{x_{n+1}-x}. \label{2}
\end{align}
Let us set
\begin{equation}
    \begin{cases}
    \bar x_{n+1}&=\text{prox}_{\gamma_n f}(x_n-\gamma_n \nabla h(y_n)-\gamma_n K^*(u_n)),\\
    \bar v_{n+1}&=\text{prox}_{\gamma_n g^*}(v_n-\gamma_n \nabla \ell(u_n)+\gamma_n K(y_n)).
    \end{cases}
\end{equation}
Then, using the Cauchy Schwarz's inequality and the nonexpansiveness of $\prox_{\gamma_n f}$, we obtain
\begin{align}
  &\scal{\nabla h(y_n)-\nabla H(y_n,\xi_n)}{x_{n+1}-x} \notag\\
&\quad = \scal{\nabla h(y_n)-\nabla H(y_n,\xi_n)}{x_{n+1}-\bar x_{n+1}}+\scal{\nabla h(y_n)-\nabla H(y_n,\xi_n)}{\bar x_{n+1}-x} \notag \\
&\ \ \ \le \|\nabla H(y_n,\xi_n)-\nabla h(y_n)\|\|x_{n+1}-\bar x_{n+1}\|+\scal{\nabla h(y_n)-\nabla H(y_n,\xi_n)}{\bar x_{n+1}-x} \notag \\
&\ \ \ \le \gamma_n \|\nabla H(y_n,\xi_n)-\nabla h(y_n)\|^2+\scal{\nabla h(y_n)-\nabla H(y_n,\xi_n)}{\bar x_{n+1}-x}. \label{4}
\end{align}
By the same way,
\begin{align}
& \scal{\nabla \ell(u_n)-\nabla L(u_n,\zeta_n)}{v_{n+1}-v} \notag\\
 &\quad \le   \gamma_n \|\nabla L(u_n,\zeta_n)-\nabla \ell(u_n)\|^2+\scal{\nabla \ell(u_n)-\nabla L(u_n,\zeta_n)}{\bar v_{n+1}-v}\label{4'}.
\end{align}
It follows from
 \eqref{1}, \eqref{2} and \eqref{4}, \eqref{4'} that 
\begin{align}
&G(x_{n+1},v)-G(x,v_{n+1}) \notag \\
&\quad \le \dfrac{1}{\gamma_n} \big(\scal{v-v_{n+1}}{v_{n+1}-v_n}+\scal{x_n-x_{n+1}}{x_{n+1}-x} \big)+\dfrac{\mu_h}{2}\|x_{n+1}-y_n\|^2\notag \\
&\quad\quad +\scal{K(x_{n+1}-y_n)}{v-v_{n+1}}+\scal{K(x_{n+1}-x)}{v_{n+1}-u_n}
+\frac{\mu_\ell}{2}\|v_{n+1}-u_n\|^2\notag\\
&\quad\quad  +\scal{\nabla h(y_n)-\nabla H(y_n,\xi_n)}{\bar x_{n+1}-x }
+\scal{\nabla \ell(u_n)-\nabla L(u_n,\zeta_n)}{\bar v_{n+1}-v}\notag\\
&\quad\hspace{3cm} +\gamma_n \|\nabla H(y_n,\xi_n)-\nabla h(y_n)\|^2+\gamma_n \|\nabla L(u_n,\zeta_n)-\nabla \ell(u_n)\|^2,
\end{align}
which is equivalent to
\begin{align}
     \gamma_n& \big(G(x_{n+1},v)-G(x,v_{n+1})\big) \notag \\
&\le \big(\scal{v-v_{n+1}}{v_{n+1}-v_n}+\scal{x_n-x_{n+1}}{x_{n+1}-x} \big)+\dfrac{\mu_h \gamma_n}{2}\|x_{n+1}-y_n\|^2\notag \\
&\ \ +\gamma_n \big(\scal{K(x_{n+1}-y_n)}{v-v_{n+1}}+\scal{K(x_{n+1}-x)}{v_{n+1}-u_n} \big)+\frac{\mu_\ell \gamma_n}{2}\|v_{n+1}-u_n\|^2\notag\\
&\ \ +\gamma_n^2 \big( \|\nabla H(y_n,\xi_n)-\nabla h(y_n)\|^2+ \|\nabla L(u_n,\zeta_n)-\nabla \ell(u_n)\|^2 \big) \notag \\
&\ \ +\gamma_n \big(\scal{\nabla h(y_n)-\nabla H(y_n,\xi_n)}{\bar x_{n+1}-x}+\scal{\nabla \ell(u_n)-\nabla L(u_n,\zeta_n)}{\bar v_{n+1}-v}\big). \label{g}
\end{align}
For simple, set $\mu_0 = \max\{\mu_h,\mu_\ell\}$ and let us define some notations in the space $\HH\times\GG$ where the scalar product and the associated norm are defined in the normal manner,
\begin{equation}
    \begin{cases}
            \mathsf{x} &= (x,v),\quad\;
        \mathsf{x}_n = (x_n,v_n),\quad\;
                \mathsf{y}_n = (y_n,u_n),\; \overline{\mathbf{x}}_n= (\overline{x}_n,\overline{v}_n),\\
          \mathsf{r}_n &= (\nabla H(y_n,\xi_n),\nabla L(u_n,\zeta_n) ),\\
          \mathsf{R}_n &= (\nabla h(y_n), \nabla \ell(u_n)),\\
    \end{cases}
\end{equation}
and 
\begin{equation}
    S\colon\HH\times\GG\to\HH\times\GG\colon (x,v)\mapsto (K^*v,-Kx).
\end{equation}
Then, one has $\|S\| = \|K\|$ and 
\begin{align}
  &\scal{K(x_{n+1}-y_n)}{v-v_{n+1}}+\scal{K(x_{n+1}-x)}{v_{n+1}-u_n}  \notag\\
   &\quad= \scal{S(\mathsf{x}_{n+1}-\mathsf{x}_n)}{\mathsf{x}_{n+1}-\mathsf{x}}
  - \scal{S(\mathsf{x}_{n}-\mathsf{x}_{n-1})}{\mathsf{x}_{n}-\mathsf{x}}
  - \scal{S(\mathsf{x}_n- \mathsf{x}_{n-1})}{\mathsf{x}_{n+1}-\mathsf{x}_n}\notag\\
  &\quad \leq d_{n+1}- d_n + \frac{\|K\|}{2} \big(\|\mathsf{x}_n- \mathsf{x}_{n-1})\|^2
  + \|\mathsf{x}_{n+1}-\mathsf{x}_n\|^2\big),\label{e:vd0}
\end{align}
where we set $d_n =\scal{S(\mathsf{x}_{n}-\mathsf{x}_{n-1})}{\mathsf{x}_{n}-\mathsf{x}}$.
Moreover, we also have
\begin{align}
   \scal{v-v_{n+1}}{v_{n+1}-v_n}&+\scal{x_n-x_{n+1}}{x_{n+1}-x}\notag\\
   & = \scal{\mathsf{x}_{n+1}-\mathsf{x}_n}{\mathsf{x}-\mathsf{x}_{n+1}}\notag\\
   &= \frac12 \|\mathsf{x}_n-\mathsf{x}\|^2-\frac12 \|\mathsf{x}_n-\mathsf{x}_{n+1}\|^2
   -\frac12 \|\mathsf{x}_{n+1}-\mathsf{x}\|^2.\label{e:vd1}
\end{align}
Furthermore, using the triangle inequality, we obtain
\begin{equation}
    \dfrac{\mu_h \gamma_n}{2}\|x_{n+1}-y_n\|^2 + \frac{\mu_\ell \gamma_n}{2}\|v_{n+1}-u_n\|^2 \leq \gamma_n\mu_0 \big(\|\mathsf{x}_n-\mathsf{x}_{n+1}\|^2 +\|\mathsf{x}_n-\mathsf{x}_{n-1}\|^2 \big).
    \label{e:vd2}
\end{equation}
Finally, we can rewrite the two last terms in \eqref{g} as 
\begin{align}
   & \gamma_n^2 \big( \|\nabla H(y_n,\xi_n)-\nabla h(y_n)\|^2+ \|\nabla L(u_n,\zeta_n)-\nabla \ell(u_n)\|^2 \big) \notag \\
&\ \ +\gamma_n \big(\scal{\nabla h(y_n)-\nabla H(y_n,\xi_n)}{\bar x_{n+1}-x}+\scal{\nabla \ell(u_n)-\nabla L(u_n,\zeta_n)}{\bar v_{n+1}-v}\big)\notag\\
&= \gamma_{n}^2 \|\mathsf{r}_n-\mathsf{R}_n\|^2 
+ \gamma_n\scal{\mathsf{r}_n-\mathsf{R}_n}{\mathsf{x}-\bar{\mathsf{x}}_{n+1}}\label{e:vd3}
\end{align}
Therefore, inserting \eqref{e:vd0},  \eqref{e:vd1}, \eqref{e:vd2} and \eqref{e:vd3} into 
\eqref{g} and rearranging, we get
\begin{align}
       \gamma_n \big(G(x_{n+1},v)-&G(x,v_{n+1})\big)\leq 
       \frac12 \|\mathsf{x}_n-\mathsf{x}\|^2- \frac12 \|\mathsf{x}_{n+1}-\mathsf{x}\|^2 + \gamma_nd_{n+1}-\gamma_n d_n\notag\\
     &  -(\frac12 -\gamma_n\mu_0 -\frac{\gamma_n\|K\|}{2})\|\mathsf{x}_n-\mathsf{x}_{n+1}\|^2 
       +(\gamma_n\mu_0 +\frac{\gamma_n\|K\|}{2})\|\mathsf{x}_n-\mathsf{x}_{n-1}\|^2\notag\\
       &\hspace{5cm}+\gamma_{n}^2 \|\mathsf{r}_n-\mathsf{R}_n\|^2 
+ \gamma_n\scal{\mathsf{r}_n-\mathsf{R}_n}{\mathsf{x}-\bar{\mathsf{x}}_{n+1}}.\label{e:vd6}
\end{align}
Let us set 
\begin{equation}
    b_{n} = \frac12 \|\mathsf{x}_n-\mathsf{x}\|^2+ (\gamma_n\mu_0 +\frac{\gamma_n\|K\|}{2})\|\mathsf{x}_n-\mathsf{x}_{n-1}\|^2 -\gamma_n d_n.
\end{equation}
We have \begin{align*}
   & |\gamma_n d_n| \le \gamma_n \|K\| \|\mathsf{x}_n-\mathsf{x}\|\|\mathsf{x_n}-\mathsf{x}_{n-1}\| \le \frac{\gamma_n \|K\|}{2}\big(\|\mathsf{x}_n-\mathsf{x}\|^2+\|\mathsf{x_n}-\mathsf{x}_{n-1}\|^2 \big)\\
    &\Rightarrow b_n \ge 0 \ \forall n \in \NN
    \end{align*}
Then, we can rewrite \eqref{e:vd6} as
\begin{align}
       \gamma_n \big(G(x_{n+1},v)-&G(x,v_{n+1})\big)\leq b_{n}-b_{n+1}
       -(\frac12 -2\gamma_n\mu_0 -\frac{2\gamma_n\|K\|}{2})\|\mathsf{x}_n-\mathsf{x}_{n+1}\|^2  \notag\\
       &\hspace{1cm}+(\gamma_n-\gamma_{n+1})d_{n+1}+\gamma_{n}^2 \|\mathsf{r}_n-\mathsf{R}_n\|^2 
+ \gamma_n\scal{\mathsf{r}_n-\mathsf{R}_n}{\mathsf{x}-\bar{\mathsf{x}}_{n+1}}.\label{e:vd7}
\end{align}
Now, using our assumption, since $\overline{\mathsf{x}}_{n+1}$ is independent of $(\xi_n,\zeta_n)$,  we have 
\begin{equation}
    \mathsf{E}\left[\scal{\mathsf{r}_n-\mathsf{R}_n}{\mathsf{x}-\bar{\mathsf{x}}_{n+1}}|(\xi_0,\zeta_0),\ldots (\xi_{n-1},\zeta_{n-1}) \right] =0.
\end{equation}
Moreover, the condition on the learning rate gives 
 \begin{equation}
 \frac12 -2\gamma_n\mu_0 -\gamma_n\|K\| \geq 0\  \text{and}\; \gamma_n -\gamma_{n+1}\geq 0.
 \end{equation}
 Therefore, taking expectation both sides of \eqref{e:vd7}, we obtain
 \begin{align}
    \mathsf{E}[\gamma_n\big(G(x_{n+1},v)-&G(x,v_{n+1})\big)]
    \leq \mathsf{E}\left[b_{n}\right]-\mathsf{E}\left[b_{n+1}\right]
      + (\gamma_n-\gamma_{n+1})c(x,v) +\gamma_{n}^2   \mathsf{E}\left[\|\mathsf{r}_n-\mathsf{R}_n\|^2\right]. \label{e:vd8}
\end{align}
Now, for any $N\in\NN$,
summing \eqref{e:vd7} from $n=0$ to $n=N$ and invoking the convexity-concavity of $G$, we arrive at the desired result.
\end{proof}

\begin{remark} Here are some remarks.
\begin{enumerate}
\item To the best of our knowledge, this is first work establishing the rate convergence of the primal-dual gap for structure convex optimization involving infimal convolutions.
    \item The results presented in this Section are new even in the deterministic setting. In this case, by setting $\gamma_n \equiv\gamma$, our results share the same rate convergence $\mathcal{O}(1/ N)$ of the primal-dual gap as in \cite{te15}. While in the stochastic setting, our  results share the same rate convergence of the primal-dual gap as in \cite{bang4} under the same conditions on $(\gamma_n)_{n\in\NN}$ and variances as  in \eqref{e:sumgam}. However, the work in \cite{bang4} are limited to the case $\ell$ is a constant function.
\end{enumerate}

\end{remark}

\end{problem}


\begin{thebibliography}{99}
\bibitem{Atchade17}
Y. F. Atchade, G. Fort and E. Moulines, 
On perturbed proximal gradient algorithms,
{\em J. Mach. Learn. Res.}, Vol. 18, pp. 310–342, 2017.

\bibitem{livre1} H.H. Bauschke and P.L. Combettes,
Convex Analysis and Monotone Operator Theory in Hilbert Spaces, 
2nd edn. Springer, New York, 2017.

\bibitem{Buicomb} 
M.N. B\`ui and P. L . Combettes, Multivariate monotone inclusions in saddle form, https://arxiv.org/pdf/2002.06135.pdf.

\bibitem{siop2}
 {L. M. Brice\~no-Arias and  P. L. Combettes},
{ A monotone$+$skew splitting model for composite monotone 
inclusions in duality}, {\em SIAM J. Optim.}, {Vol. 21}, 
 pp.~ 1230--1250, 2011.

\bibitem{Luis18}
{ L. M. Brice\~no-Arias and  D. Davis},
{ Forward-Backward-Half Forward Algorithm for Solving Monotone Inclusions},
{\em SIAM J. Optim.}, {Vol. 28}, pp. 2839--2871, 2018.

 \bibitem{Cham16}
 A. Chambolle and T. Pock,
 An introduction to continuous optimization for imaging,
 {\em Acta Numer.}, Vol. 25, pp. 161-319, 2016.
 
 \bibitem{Combettes2015} P.L. Combettes and J.-C. Pesquet, Stochastic quasi-Fej\'{e}r block-coordinate fixed point iterations with random sweeping,
 {\em SIAM J. Optim.}, Vol. 25, pp. 1221-1248, 2015.
 
  \bibitem{plc6} P. L. Combettes and J.-C. Pesquet,
  Primal-dual splitting algorithm for solving inclusions with mixtures of composite, Lipschitzian, and parallel-sum type monotone operators,
  {\em Set-Valued Var. Anal.}, Vol. 20, pp. 307-330, 2012.
  
 \bibitem{Combettes2016} P.L. Combettes and J.-C. Pesquet, 
 Stochastic approximations and perturbations in forward-backward splitting for monotone operators, 
 {\em  Pure Appl. Funct. Anal.}, vol. 1, pp. 13-37, 2016.
 
 \bibitem{plc7} P.L. Combettes and J.-C. Pesquet,
 Proximal splitting methods in signal processing.
 Fixed-point algorithms for inverse problems in science and engineering,{\em Optim. Appl.}, Vol.49,  pp. 185-212, Springer, New York, 2011.
 
\bibitem{sva2} V. Cevher and B. C. V\~u, 
A reflected forward-backward splitting method for monotone inclusions involving Lipschitzian operators,
{\em Set-Valued Var. Anal.,} 2020. 

\bibitem{bang3}
V. Cevher, B. C. V\~u and A. Yurtsever,
Stochastic Forward Douglas-Rachford Splitting Method for Monotone Inclusions. In: Giselsson P., Rantzer A. (eds) Large-Scale and Distributed Optimization, {\em  Lecture Notes in Mathematics}, Vol 2227, Springer, Cham 2018.

\bibitem{Cui16} S. Cui and U. V. Shanbhag,
On the analysis of reflected gradient and splitting methods for monotone stochastic variational inequality problems, 
{\em IEEE 55th Conference on Decision and Control (CDC)}, ARIA Resort $\&$ Casino
December 12-14, 2016, Las Vegas, USA, 2016.

\bibitem{Cui20} S. Cui and U. V. Shanbhag, Variance-Reduced Proximal and Splitting Schemes for Monotone Stochastic Generalized Equations
https://arxiv.org/abs/2008.11348
\bibitem{Duchi09}
J. Duchi and Y. Singer,
Efficient online and batch learning using forward backward splitting,
{\em J. Mach. Learn. Res.}, Vol.10, pp. 2899-2934, 2009.
\bibitem{Bach14}
A. Defazio, F.Bach and S. Lacoste-Julien,
SAGA:afast increment algradient method with support 
for non-strongly convex composite objectives, 
{\em Adv. Neural Inf. Process. Syst.}, Vol. 27, pp. 1646–1654,2014.
\bibitem{te15}
Y. Drori, S. Sabach, and M. Teboulle, A simple algorithm for a class of nonsmooth convex- concave saddle-point problems,
{\em Oper. Res. Lett.}, Vol. 43, pp. 209-214, 2015.



\bibitem{Lan12}
S. Ghadimi and G. Lan, 
Optimal stochastic approximation algorithms for strongly convex stochastic
composite optimization, I: a generic algorithmic framework,
{\em SIAM J. Optim.},  Vol. 22, pp. 1469-1492, 2012.

\bibitem{Nem11}
A. Juditsky, A. Nemirovski, C. Tauvel,
Solving variational inequalities with stochastic mirror-prox
algorithm, {\em Stoch. Syst.} Vol. 1, pp. 17–58, 2011.

\bibitem{Kwok09}
J.T. Kwok, C. Hu and W. Pan,
Accelerated gradient methods for stochastic optimization and online
learning, {\em Adv. Neural Inf. Process. Syst.}, Vol. 22, pp.781-789, 2009.

\bibitem{Malitsky15}
 Y. Malitsky, 
 Projected reflected gradient methods for monotone variational inequalities,
{\em SIAM J. Control Optim.}, Vol. 25,  pp. 502--520, 2015.

\bibitem{Malitsky18b}
 Y. Malitsky and  M. K.Tam,
A Forward-Backward Splitting Method for Monotone Inclusions Without Cocoercivity,
{\em SIAM J. Optim.},  Vol. 30, pp. 1451-1472, 2020.

\bibitem{pesquet}J.-C. Pesquet and A. Repetti,
A class of randomized primal-dual algorithms for distributed optimization,
{\em J. Nonlinear
Convex Anal.}, Vol. 16, pp. 2453-2490, 2015.
\bibitem{Ouy13}
H. Ouyang, N. He, L. Tran, and A. Gray,  
Stochastic alternating direction method of multipliers,
{\em In Proceedings of the 30th International Conference on Machine Learning},
Atlanta, GA, USA, 2013.
\bibitem{Led} M. Ledoux, M. Talagrand, Probability in Banach spaces: isoperimetry and processes, Springer, New York, 1991

\bibitem{Pesquet15}
 N. Komodakis  and  J.-C.Pesquet,
 Playing with duality: An overview of recent primal-dual approaches 
for solving large-scale optimization problems,
{\em IEEE Signal processing magazine}, Vol. 32, pp. 31-54, 2015.
\bibitem{robbins}
H. Robbins and   D. Siegmund,  
A convergence theorem for non negative almost supermartingales and some
applications. In: Rustagi JS, editor. {\em Optimizing methods in statistic}, New York (NY): Academic Press; 
pp. 233-257, 1971.

\bibitem{Jota2} L. Rosasco, S. Villa and, B. C. V\~u
Stochastic forward-backward splitting for monotone inclusions,
{\em J. Optim. Theory Appl.}, Vol. 169, pp. 388-406, 2016.

\bibitem{bang1} L. Rosasco, S. Villa, B. C. V\~u,  A stochastic inertial forward-backward splitting algorithm for multivariate monotone inclusions,
{\em Optimization}, Vol. 65, pp. 1293-1314, 2016.

\bibitem{bang4} L. Rosasco, S. Villa, B. C. V\~u,  A First-order stochastic primal-dual algorithm with correction step, {\em Numer. Funct. Anal. Optim.}, Vol. 38, pp.602-626, 2017.

\bibitem{bang7}  L. Rosasco, S. Villa, B. C. V\~u, Convergence of Stochastic Proximal Gradient Algorithm, {\em Applied Mathematics $\&$ Optimization}, 2019.
https://link.springer.com/article/10.1007/s00245-019-09617-7

\bibitem{Vu19}
 E. K. Ryu and B. C. V\~u, 
 Finding the Forward-Douglas-Rachford-Forward Method, 
  {\em J. Optim. Theory Appl.}, Vol. 184, pp. 858--876, 2020.
  
\bibitem{Tseng00}
 { P. Tseng},
{A modified forward-backward splitting method for maximal monotone mappings},

{\em SIAM J. Control Optim.}, { 38} (2000), pp. 431--446.

\bibitem{bang8} A. Yurtsever, B. C. V\~u, V. Cevher, Stochastic three-composite convex minimization, {\em Advances in Neural Information Processing Systems}, pp. 4329-4337, 2016.

\bibitem{Bang16} B. C. V\~u, 
Almost sure convergence of the forward-backward-forward splitting algorithm,
{\em Optim. Lett.}, Vol. 10, pp. 781-803, 2016.

\end{thebibliography}
\end{document}